\pgfplotsset{compat=1.18}
\definecolor{mygreen}{HTML}{43a047}
\newcolumntype{H}{>{\setbox0=\hbox\bgroup}c<{\egroup}@{}}
\newcommand{\Om}{\Omega}
\newcommand{\D}{\Delta}
\def \psit{\psi_t}
\def \psitt{\psi_{tt}}
\def \psittt{\psi_{ttt}}
\newcommand{\ddt}{\frac{\textup{d}}{\textup{d}t}}
\newcommand{\ds}{\, \textup{d} s }
\newcommand{\dx}{\, \textup{d} x}
\newcommand{\dxs}{\, \textup{d}x\textup{d}s}
\newcommand{\inttO}{\int_0^t \int_{\Omega}}
\newcommand{\intt}{\int_0^t}
\newcommand{\intO}{\int_{\Omega}}
\newcommand{\R}{\mathbb{R}} 
\newcommand{\Ltwo}{L^2(\Omega)}
\newtheorem{theorem}{Theorem}
\newtheorem{lemma}{Lemma}
\newtheorem{corollary}{Corollary}
\newtheorem*{assumption*}{Assumptions}
\newtheorem{assumption}{Assumption}
\newtheorem{remark}{Remark}
\numberwithin{lemma}{section}
\numberwithin{proposition}{section}
\numberwithin{theorem}{section}
\numberwithin{equation}{section}
\newcommand{\leqnomode}{\tagsleft@true}
\newcommand{\reqnomode}{\tagsleft@false}
\newcommand{\genk}{\mathfrak{K}}
\newcommand\Lconv{\ast}
\definecolor{grey}{rgb}{0.5,0.5,0.5}
\colorlet{brown}{brown!80!black}
\definecolor{darkgreen}{rgb}{0,0.5,0}
\newcommand{\alignedintertext}[1]{%
	\noalign{%
		\vskip\belowdisplayshortskip
		\vtop{\hsize=\linewidth#1\par
			\expandafter}%
		\expandafter\prevdepth\the\prevdepth
	}%
}
\title[]{ Well-posedness and global extensibility \\ criteria for time-fractionally damped Jordan--Moore--Gibson--Thompson equation}
\subjclass[2020]{35A01, 35A02, 35B40, 35D30, 35L05, 35L35, 35R11}    
\keywords{Moore--Gibson--Thompson equation, wave equations, conditional existence, blow-up criterion}  
\author[M. Meliani and B. Said-Houari]{Mostafa Meliani \and Belkacem Said-Houari}
\address{ 
	Department of Evolution Differential Equations \\ 
	Czech Academy of Sciences   \\ 
	\v Zitna 25,
	115 67 Praha 1, Czech Republic}
\email{meliani@math.cas.cz} 
\address{ 
	Department of Mathematics \\ 
	College of Sciences\\
	University of Sharjah
	\\
	P. O. Box: 27272, Sharjah, UAE}
\email{bhouari@sharjah.ac.ae} 
\begin{document}

\begin{abstract}
	In this paper, we consider the Jordan--Moore--Gibson--Thompson with a time-fractional damping term of the type $\delta   \textup{D}_t^{1-\alpha} \Delta \psit$ where we allow the challenging so-called critical case ($\delta=0$). This equation arises in the context of  acoustic propagation through thermally relaxed media.
  We tackle the question of long-time existence of the solution. More precisely, the goal of the paper is twofold: First,  we establish local well-posedness of the initial boundary value problem, where we also provide a lower bound on the final time of existence as a function of initial data. Second, we prove a regularity result which guarantees, under the hypothesis of boundedness of certain quantities, that the local solution can be extended to be global-in-time. 
\end{abstract}

\maketitle
\section{Introduction}
In this work, we consider the fractionally damped Jordan--Moore--Gibson--Thompson (fJMGT) equation:
\begin{equation}\label{eq:first_eq}
		\tau \psittt+(1+2k\psit) \psitt
	-c^2  \Delta \psi - \tau c^2  \Delta\psit
	- \delta   \textup{D}_t^{1-\alpha} \Delta \psit=0,
\end{equation}
on a bounded Lipschitz domain $\Omega\subset\R^d$ with $d\in \{1,2,3\}$. The parameters $\tau>0$, $\delta\geq 0$ (allowing the so-called critical case for the JMGT equation), while $k \in \R$. 
Here $\textup{D}_t^{\gamma}$ denotes the Caputo--Djrbashian \cite{Djrbashian:1966,Caputo:1967} 
 derivative of order $\gamma \in (0,1)$:
\begin{equation}
	\textup{D}_t^{\gamma} v = \frac{1}{\Gamma(1-\gamma)} \intt (t-s)^{-\gamma} v_t \ds \quad \text{for }\ v \in W^{1,1}(0,T), \, t \in (0,T). 
\end{equation}
 Hence, we can express the last term in \eqref{eq:first_eq} as
\begin{equation}
    \delta   \textup{D}_t^{1-\alpha} \Delta \psit= \delta \genk \Lconv\Delta \psitt=\delta\int_0^t g_\alpha(t-s)\Delta \psi_{tt}(s)\ds,
\end{equation}
where $g_\alpha$ is the Abel fractional integral kernel defined as 
\begin{equation}\label{eq:Abel_kernel}
	g_\alpha: t \mapsto \frac1{\Gamma(1-\alpha)} t^{-\alpha},\qquad \alpha \in (0,1). 
\end{equation}

In recent years, considerable attention has been devoted to the study of the JMGT equation and its linear counterpart, known as the MGT equation. See \cite{Kaltenbacher_2011,Klaten_2015,Trigg_et_al,P-SM-2019,kaltenbacher2019jordan,kaltenbacher2012well,Chen_Ikehata_2021,racke2021global} and references therein. 
Recently, nonlocal generalizations of these equations have been proposed in \cite{kaltenbacher2022time} using the Compte–Metzler fractional interpolations of the Fourier and Maxwell–Cattaneo flux laws \cite{Compte_Albert_Metzler}. In \cite{nikolic2024nonlinear} Nikoli\'{c}  studied the local well-posedness  and the singular limit $\tau \to 0^+$ of a nonlinear fractional JMGT equation in pressure form 
\[\tau \genk\Lconv u_{ttt} + u_{tt}  -\tau c^2 \genk\Lconv \Delta u_t -c^2  \Delta u - \delta  \D u_t = (ku^2)_{tt},\]
where the unknown is the pressure $u$ and $\ast$ denotes the Laplace convolution in time. 
 Here $\genk$ is the Abel kernel
\begin{equation}
     \genk(t)=\frac{1}{\Gamma(1-\alpha)}t^{-\alpha},\qquad \alpha\in (1/2,1),
\end{equation}
where $\Gamma(\cdot)$  denotes the Gamma function. 
Taking advantage of the specific structure of the equation ($\phi + \genk \Lconv \phi_t + \delta\D u_t = (ku^2)_{tt},$ with $\phi = u_{tt} - c^2 \D u$)
the author showed well-posedness of the homogeneous Dirichlet problem with initial data 
\[(u_0,u_1,u_2) \in H^2(\Omega)\cap H_0^1(\Omega) \times  H_0^1(\Omega) \times L^2(\Omega).\]

 Studies concerned with the well-posedness of the related linear Moore--Gibson--Thompson equation~\cite{meliani2023unified} and its long-time behavior have also been of interest.  In ~\cite{meliani2025energy}, the authors of this paper identified appropriate conditions on the convolution kernel that ensure the existence of solutions of the problem 
 \begin{equation}\label{Eq_Frac_2}
     \tau \genk\Lconv \psi_{ttt} + \psi_{tt}  -\gamma \genk\Lconv\Delta \psi_t -c^2 \Delta \psi - \delta  \D\tau \psi_t = 0,
 \end{equation}
 where the unknown is the velocity potential $\psi$. 
 Specifically, our result was based on the existence of regular resolvents and appropriate coercivity inequalities for the convolution kernel $\genk$. In addition, we showed in~\cite{meliani2025energy} the decay of the energy of the solution of \eqref{Eq_Frac_2},
 provided that the $\gamma>\tau c^2$, $\delta>0$ and $\genk$ is, among others, an exponentially decaying kernel. 
Equation \eqref{eq:first_eq} was first derived  in \cite{kaltenbacher2022time} as an equation for ultrasound wave propagation through media exhibiting anomalous diffusion, where heat conduction is described by  the following heat flux law:
\begin{equation}\label{eq:heat_flux_GFEIII}
	 (1+\tau \partial_t)\boldsymbol{q}(t) =\, -\kappa \textup{D}_t^{1-\alpha} \nabla \theta
\end{equation}
with $\alpha \in (0,1)$ and $\kappa$ is the thermal conductivity of the medium. Heat flux law \eqref{eq:heat_flux_GFEIII} was proposed as a generalization of the well-known Maxwell--Cattaneo heat flux law~\cite{cattaneo1958forme} 
\begin{equation}\label{Cattaneo}
    \tau\partial_t \boldsymbol{q}+\boldsymbol{q}=-\kappa \nabla \theta,
\end{equation}
which can be formally  obtained from \eqref{eq:heat_flux_GFEIII}  by setting $\alpha=1$. 
Note that the Maxwell--Cattaneo constitutive relation (\ref{Cattaneo}) can be
seen as a first-order approximation of a more general constitutive relation
(single$\--$phase$\--$lagging model; Tzou \cite{Tzou_1992}),
\begin{equation}
\boldsymbol{q}(x,t+\tau )=-\kappa \nabla \theta (x,t).  \label{Cattaneo_2}
\end{equation}
which  states that the temperature gradient
established at a point $x$ at time $t$ gives rise to a heat flux vector at $x $ at a later time $t+\tau $. The delay time $\tau $ is interpreted
as the relaxation time due to the fast-transient effects of thermal inertia
(or small-scale effects of heat transport in time) and is called the
phase$\--$lag of the heat flux. 

The Maxwell--Cattaneo heat conduction is the simplest generalization of the classical Fourier law (i.e., \eqref{Cattaneo_2} with $\tau=0$)  of heat conduction, which, on one hand, removes the infinite speed propagation predicted by the Fourier law and, on the other hand, leads to hyperbolic problems that are mathematically more challenging to address.  For instance, the use of Maxwell--Cattaneo law instead of the classical Fourier law in the equations of compressible fluid dynamics leads to the Jordan--Moore--Gibson--Thompson (i.e., equation \eqref{eq:first_eq} with $\alpha=1$) which is of a hyperbolic nature due to the presence of the term $\tau\psi_{ttt}$  \cite{jordan2014second}. It is known that the JMGT equation is relatively difficult to study compared to its parabolic counterpart (the Westervelt equation).  

 In practice, the parameter $\tau$ in \eqref{eq:heat_flux_GFEIII} and \eqref{Cattaneo_2} is small compared to the other parameters of the JMGT model, it is then natural to study the singular limit when $\tau\rightarrow 0^+$; see, e.g., \cite{kaltenbacher2020vanishing,kaltenbacher2019jordan,bongarti2020vanishing,bongarti2020singular} for references on the limiting behavior of \eqref{Cattaneo_2} and the aforementioned \cite{nikolic2024nonlinear,meliani2023unified} for $\tau$-limits of fractional JMGT equations. Of particular interest to the current work is the result in \cite{kaltenbacher2024vanishing}, where Kaltenbacher and Nikoli\'c studied the limiting behavior of $\tau\to0^+$ for a large family of JMGT equations with different assumptions on the kernel $\genk$. The study of \eqref{eq:first_eq} is coverd by \cite[Theorems 3.1]{kaltenbacher2024vanishing} with a restriction on the derivative order $\alpha>1/2$. \footnote{it is likely that this condition can be removed if one is not interested in $\tau\to0^+$ in~\cite{kaltenbacher2024vanishing}} 
 
The MGT and the JMGT equations with a memory term have also been investigated recently.
In \cite{lasiecka2017global}  Lasiecka considered the problem 
\begin{eqnarray}
    \tau u_{ttt} + u_{tt}  -\tau c^2 \Delta u_t -c^2 \Delta u - \delta \genk\Lconv \D(\tau u_t+ u) = (ku^2)_{tt},
\end{eqnarray}
where $\genk$ is assumed to be an exponentially decaying, integrable function. This assumption on the kernel allows the author to extract damping from the nonlocal term by adopting some techniques from the study of the viscoelastic wave equation  along the lines of~\cite{dafermos1970asymptotic}. It is important to mention that the method used in \cite{lasiecka2017global} cannot be applied to our problem \eqref{eq:first_eq} since the fractional-derivative convolution kernel given in \eqref{eq:Abel_kernel} does not satisfy the assumptions in \cite{lasiecka2017global}.  For more results about the  MGT with memory, the reader is referred to \cite{Bounadja_Said_2019,Liuetal._2019,dell2016moore} and to \cite{lasiecka2017global,NIKOLIC2021172} for the JMGT with memory. 

\subsection{Main Contributions} 
The main contributions of this paper, as outlined below, are twofold: First, establishing  a renewed local existence theory for equation \eqref{eq:first_eq} and second, proving a conditional global existence result by showing that under appropriate assumptions, the solution does not blow up in finite time.  

\subsection{Local well-posedness}
 The local well-posedness of \eqref{eq:first_eq} with homogeneous Dirichlet conditions and  initial data satisfying 
\[(\psi_0,\psi_1,\psi_2) \in \big(H^2(\Omega)\cap H_0^1(\Omega)\big) \times \big(H^2(\Omega)\cap H_0^1(\Omega)\big) \times H_0^1(\Omega)\]
has been the subject of the study in  \cite{kaltenbacher2022time}. The authors showed that the problem is well-posed using a fixed-point argument in which the final time of existence and the size of initial data are chosen small enough to obtain the self-mapping and the strict contractivity properties of the fixed-point mapping. From the proof of \cite[Theorem 4.1]{kaltenbacher2022time}, it seems not possible to obtain the existence of a solution beyond a certain time, even for arbitrarily small initial data due to the proof of strict contractivity where the final time appears in the estimate. Thus, proving strict contractivity in their proof requires reducing the size of the time interval irrespective of the data size.

The local well-posedness of \eqref{eq:first_eq}, was also covered, as mentioned previously, by \cite{kaltenbacher2024vanishing}. Although the fixed-point argument in \cite{kaltenbacher2024vanishing} allows for arbitrary final time and only relies on the smallness of initial data, 
it does not provide an estimate on the evolution of the final time of existence as a function of the size of the initial data.
The proofs in the aforementioned studies rely on a suitable linearization of the nonlinear equation together with a fixed-point argument for sufficiently small initial data and final time. 

In this work, we adopt a different approach. We do not linearize \eqref{eq:first_eq}  at the stage of the energy arguments but rather use results on the existence theory of nonlinear Volterra equations (e.g., \cite[Ch. 12]{gripenberg1990volterra}) to show that the semi-discrete Galerkin approximation is solvable up to a final time which may depend, \emph{a priori}, on the Galerkin approximation level. Using energy estimates and Gronwall's lemma, we show that the final time of existence is independent of the Galerkin approximation and give an explicit lower bound on the time of existence. 
This result is stated in Theorem~\ref{prop::local_well_posedness}. We point out that by avoiding the use of a fixed-point argument, we have better control on the estimates. Indeed, linearizing the equation, introduces right-hand side terms which have to be accounted for properly in the fixed-point argument. By keeping the nonlinear terms, we have a better handle on the growth of the energy of the solution, which has two advantages: First, knowing the rate of growth of the energy, we can more easily predict an estimate on the lower bound of the energy blow-up time. Second, it allows us to identify terms that contribute to the superlinear growth of the energy (which is the problematic part). Having identified these terms, we can state a conditional global existence result.

 It is worth mentioning that estimating the final time of existence through Gronwall's lemma in the context of second-order-in-time nonlinear wave equations has already been done in, e.g., \cite{dekkers2017cauchy} and \cite{john1990nonlinear}.

\subsection{Conditional global existence}
In the so-called \emph{critical} case $\delta=0$, the question of global  existence of solution and blow-up has remained unsolved for the nonlinear Jordan--Moore--Gibson--Thompson equation~\cite{jordan2014second}
\begin{equation}\label{eq:JMGT}
	\tau \psittt+ \psitt
	-c^2  \Delta \psi - \tau c^2  \Delta\psit= N_\psi,
\end{equation}
with $N_\psi = -2k\psit\psitt$ being the nonlinear term of Westervelt type.
 
We mention here the works of \cite{chen2019nonexistence,chen2021blow} which obtained blow-up for the Cauchy problem of the JMGT equation \eqref{eq:JMGT} with modified nonlinear terms $N_\psi = |\psi|^p$ and $|\psi_t|^p$ together with suitable restrictions on the exponent $p$. 
The type of the nonlinear terms in~\cite{chen2019nonexistence,chen2021blow} is important and is related to a family of work on blow-up of semi-linear evolution equations. In particular, it is related to the Strauss~\cite{strauss1981nonlinear} and Glassey exponents. We mention here also the related Fujita exponent~\cite{fujita1966blowing} for nonlinear wave equations; see, e.g.,~\cite{TODOROVA2001464}.

The work of \cite{chen2022influence} proved blow-up of the solution for \eqref{eq:JMGT} with Kuznetsov-type nonlinearity i.e., $N_\psi = 2k\psit\psitt + \nabla\psi_t\cdot\nabla\psi$. However, as pointed out by Nikoli\'c and Winkler in their recent work~\cite{nikolic2024infty}, there is a poor understanding of the qualitative behavior near blow-up for the different nonlinear JMGT equations. The work of \cite{nikolic2024infty} set to fill in this gap by showing that blow-up of \eqref{eq:JMGT} with the Westervelt-type nonlinearity is driven by inflation of the $L^\infty$-norm of the solution $\|\psi_t\|_{L^\infty(\Omega)}$ (as opposed to a gradient blow-up). 

In the nonlocal-in-time case, to the best of our knowledge, there are no results on blow-up of solutions or global existence for \eqref{eq:first_eq}. We set out to prove a conditional global existence result in 2D similar to that of \cite{nikolic2024infty}. We show that one can extend regular solutions in time as long as the quantity:
\[\|\psi_t\|_{H^1(\Omega)} + \|\psi_{tt}\|_{L^2(\Omega)}\]
remains bounded uniformly with respect to time.
The main ingredient of our proof is the Brezis--Galou\"et inequality, which was originally established to show the global well-posedness of the 2D cubic nonlinear Schr\"odinger equation~\cite{brezis1980nonlinear}; see Lemma~\ref{Lemma::brezis_gallouet}. The inequality is used to control the nonlinear term $(2k\psit\psitt)$ in \eqref{eq:first_eq}. Compared to the well-known embedding $H^2(\Omega)\hookrightarrow L^\infty(\Omega)$, the Brezis--Gallou\"et inequality shows that we only need to control the $H^2(\Omega)$ norm in a logarithmic way in the two-dimensional case. It is proved that this logarithmic growth does not result in finite time blow-up,  provided the aforementioned boundedness assumption is satisfied. This result is given in Theorem~\ref{thm:Beale_Kato_Majda_criterion}. 

We point out that, importantly, we are in the so-called critical case in the setting of equation~\ref{eq:first_eq}. Indeed by only relying on  Assumption~\ref{assumption1} below and allowing $\delta=0$, we do not exploit the potential regularizing properties resulting from the damping term. Our results hold then for a larger set of memory kernels. Furthermore, this approach allows us to simultaneously draw conclusions on the critical local-in-time JMGT equation ($\tau>0,\,\delta=0$).

\subsection{Organization of the paper}
The rest of the paper is organized as follows: In Section~\ref{Section_Prel}, we introduce the notation used in this work alongside useful lemmas and an assumption on the memory kernel $\genk$. Section~\ref{Section_local_wellp} is devoted to establishing a local well-posedness together with an explicit lower bound on the final time of existence. In Sections~\ref{Section_Cond_reg} and \ref{sec:cond_3D}, we obtain conditional regularity results to extend the final time of existence of solutions. First in the two-dimensional case, we then generalize the result to 3D in Section~\ref{sec:cond_3D},  where we rely on a dimension-dependent generalization of the Brezis--Gallou\"et inequality.

\section{Preliminaries}\label{Section_Prel}
\subsection{Notation}
Throughout the paper, we denote by $T>0$ the final propagation time. The letter $C$ denotes a generic positive constant
that does not depend on time, and can have different values on different occasions.  
We often write $f \lesssim g$ (resp. $f \gtrsim g$) when there exists a constant $C>0$, independent of parameters of interest such that $f\leq C g$ (resp. $f \geq C g$). 
We often omit the spatial and temporal domain when writing norms; for example, $\|\cdot\|_{L^p L^q}$ denotes the norm in $L^p(0,T; L^q(\Omega))$.
\subsection{Inequalities and embedding results} In the upcoming analysis, we shall employ the continuous embeddings $H^1(\Omega) \hookrightarrow L^4(\Omega)$ and $H^2(\Omega) \hookrightarrow L^\infty(\Omega)$ which are valid for $\Omega$ a bounded domain of $\R^d$ ($d\in\{1,2,3\}$).

We will also make a repeated use of  Young's $\varepsilon$-inequality 
\begin{equation}\label{Young_Inequality}
xy \leq \varepsilon x^n+C(\varepsilon) y^m, \quad \text{where}\quad \ x, y >0, \quad 1 <m,n <\infty,\quad \frac{1}{m}+\frac{1}{n}=1,
\end{equation}
and $C(\varepsilon)=(\varepsilon n)^{-m/n}m^{-1}$. In particular, we use the following two particular cases:
\begin{equation}
xy\leq \varepsilon x^2+\frac{1}{4\varepsilon} y^2
\end{equation}
and 
\begin{equation}\label{Young_2}
xy\leq \frac{1}{4} x^{4}+\frac{3}{4}y^{4/3}. 
\end{equation}
In the proof of Theorem~\ref{thm:Beale_Kato_Majda_criterion}, we will use two main ingredients. The first ingredient is the Brezis-Gallou\"et inequality in 2D and the second ingredient is the Ladyzhenskaya interpolation inequality. We really these inequalities below. 
\begin{lemma}[Lemma 1 in \cite{brezis1980nonlinear}]\label{Lemma::brezis_gallouet}
	Let $u \in H^2(\Omega)$ with $\Omega$ a domain in $\R^2$, then
	\begin{equation}
		\|u\|_{L^\infty(\Omega)} \leq C (\|u\|_{H^1(\Omega)}\sqrt{\ln(1+\|u\|_{H^2(\Omega)})}+1),
	\end{equation}
where $C$ is a constant that depends only on $\Omega$.
\end{lemma}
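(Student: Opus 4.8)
The plan is to reduce the statement to the whole plane and then run the classical Fourier-splitting argument. First I would invoke a Stein extension operator $E\colon H^2(\Omega)\to H^2(\R^2)$ which, for the bounded Lipschitz domains considered here, is \emph{simultaneously} bounded on $H^1$ and on $H^2$, so that $\|Eu\|_{H^1(\R^2)}\leq C\|u\|_{H^1(\Omega)}$ and $\|Eu\|_{H^2(\R^2)}\leq C\|u\|_{H^2(\Omega)}$. Since $Eu=u$ on $\Omega$, we have $\|u\|_{L^\infty(\Omega)}\leq \|Eu\|_{L^\infty(\R^2)}$, and it suffices to establish the inequality for $v:=Eu\in H^2(\R^2)$ on the full space, with the right-hand side expressed through $\|v\|_{H^1(\R^2)}$ and $\|v\|_{H^2(\R^2)}$ and then translated back via the extension bounds.

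Next I would control the sup-norm by the $L^1$-norm of the Fourier transform through inversion, $\|v\|_{L^\infty(\R^2)}\lesssim \|\widehat v\|_{L^1(\R^2)}$, and split the frequency integral at a radius $R>0$ to be chosen. On $\{|\xi|\leq R\}$ I would insert the weight $(1+|\xi|^2)^{1/2}$ and apply Cauchy--Schwarz against the $H^1$-norm (using $\|v\|_{H^1(\R^2)}^2\simeq \int(1+|\xi|^2)|\widehat v|^2\,\textup{d}\xi$); on $\{|\xi|>R\}$ I would insert the weight $(1+|\xi|^2)$ and pair with the $H^2$-norm. The two resulting ``entropy'' integrals are elementary in polar coordinates,
\[
\int_{|\xi|\leq R}\frac{\textup{d}\xi}{1+|\xi|^2}=\pi\ln(1+R^2),\qquad \int_{|\xi|>R}\frac{\textup{d}\xi}{(1+|\xi|^2)^2}=\frac{\pi}{1+R^2},
\]
which yields
\[
\|v\|_{L^\infty(\R^2)}\lesssim \sqrt{\ln(1+R^2)}\,\|v\|_{H^1(\R^2)}+(1+R^2)^{-1/2}\,\|v\|_{H^2(\R^2)}.
\]

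The decisive step is the optimization in $R$. Choosing $R=\|v\|_{H^2(\R^2)}/\|v\|_{H^1(\R^2)}$ (the case $\|v\|_{H^1(\R^2)}=0$ being trivial) balances the two contributions and produces the scale-invariant bound
\[
\|v\|_{L^\infty(\R^2)}\lesssim \|v\|_{H^1(\R^2)}\Bigl(1+\sqrt{\ln\bigl(1+\|v\|_{H^2(\R^2)}^2/\|v\|_{H^1(\R^2)}^2\bigr)}\Bigr).
\]
It then remains to pass from this ratio form to the additive form in the statement. When $\|v\|_{H^1(\R^2)}\geq 1$ one has $\|v\|_{H^2(\R^2)}/\|v\|_{H^1(\R^2)}\leq \|v\|_{H^2(\R^2)}$ and $1\lesssim \sqrt{\ln(1+\|v\|_{H^2(\R^2)})}$ (since $\|v\|_{H^2(\R^2)}\geq \|v\|_{H^1(\R^2)}\geq 1$), so the whole right-hand side is absorbed into $C\|v\|_{H^1(\R^2)}\sqrt{\ln(1+\|v\|_{H^2(\R^2)})}$ after using $\ln(1+M^2)\leq 2\ln(1+M)$. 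When $t:=\|v\|_{H^1(\R^2)}<1$, writing $M:=\|v\|_{H^2(\R^2)}$ and using $\ln(1+M/t)\leq \ln(1+M)+\ln(1/t)$ together with the elementary boundedness of $t\mapsto t\sqrt{\ln(1/t)}$ on $(0,1)$, the surplus term is controlled by an additive constant, which is precisely the role played by the $+1$ in the statement. Undoing the extension via $\|v\|_{H^k(\R^2)}\leq C\|u\|_{H^k(\Omega)}$ for $k\in\{1,2\}$ and monotonicity of the logarithm then gives the claim.

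I expect the main obstacle to be this final bookkeeping rather than the analysis itself: the Fourier argument naturally produces the logarithm of the ratio $\|v\|_{H^2}/\|v\|_{H^1}$, whereas the statement carries $\|v\|_{H^2}$ alone inside the logarithm together with an isolated additive constant. Reconciling the two forms cleanly --- in particular tracking the emergence of the $+1$ through the boundedness of $t\sqrt{\ln(1/t)}$ near $t=0$, which governs the small-$H^1$-norm regime --- is the only genuinely delicate point; the extension step is standard for Lipschitz domains and the frequency integrals are routine.
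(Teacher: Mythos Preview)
Your proposal is correct and follows the same Fourier-splitting strategy that the paper invokes by citation (the paper's own proof simply refers to Br\'ezis--Gallou\"et and notes that the restriction $\|u\|_{H^1}\leq 1$ there can be removed by a straightforward update). The one place where you make life harder than necessary is the choice of cut-off: you take $R=\|v\|_{H^2}/\|v\|_{H^1}$, which is the scale-invariant optimum but forces the case analysis in $\|v\|_{H^1}$ that you rightly identify as the only delicate point. If instead you simply set $R=\|v\|_{H^2}$, as the paper does explicitly in its proof of the 3D analogue (Lemma~\ref{lemma:brezis_gallout_Wainger}), the high-frequency term becomes $(1+\|v\|_{H^2}^2)^{-1/2}\|v\|_{H^2}\leq 1$ and the low-frequency term is already $\|v\|_{H^1}\sqrt{\ln(1+\|v\|_{H^2}^2)}\lesssim \|v\|_{H^1}\sqrt{\ln(1+\|v\|_{H^2})}$, so the stated additive form with the isolated~$+1$ drops out immediately and the ``bookkeeping'' step disappears entirely.
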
 

\begin{proof}
	The inequality in \cite{brezis1980nonlinear} is proven in the case $\|u\|_{H^1(\Omega)} \leq 1$ but a straightforward update of \cite[Ineq. (3)]{brezis1980nonlinear} yields the desired result.
\end{proof}
Note that in 1D, we can make direct use of the Sobolev embedding $H^1(\Omega) \hookrightarrow L^\infty(\Omega)$.\\

We will furthermore need to use the following special case of Gagliardo--Nirenberg inequalities, first pointed out by Ladyzhenskaya~\cite{ladyzhenskaia1959solution}.
\begin{lemma}\label{lemma:ladyzhenskaya_inequality}
 Let $u \in H^1(\Omega)$  with $\Omega$ a bounded domain in $\R^d$ ($d=2,3$). Then 
\begin{equation}\label{ladyzhenskaya_ineq}
 	\|u\|_{L^4(\Omega)} \leq C \|u\|^{1-d/4}_{L^2(\Omega)} \| u\|_{H^1(\Omega)}^{d/4} 
 \end{equation}
where $C$ is a constant that depends only on $\Omega$.
\end{lemma}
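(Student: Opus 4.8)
The plan is to recognise the claim as the special case of the Gagliardo--Nirenberg interpolation inequality with target Lebesgue exponent $p=4$ interpolated between $L^2(\Omega)$ and the first-order term. The admissible interpolation parameter $a$ is fixed by the scaling relation $\frac1p = a\big(\frac1r-\frac1d\big)+(1-a)\frac1q$ with $p=4$ and $r=q=2$, which reduces to $\frac14 = \frac12 - \frac{a}{d}$ and hence $a=d/4$; this lies in $[0,1]$ precisely because $d\le 4$, and it reproduces the exponents $1-d/4$ and $d/4$ in the statement. Since the scale-invariant form involving $\|\nabla u\|_{L^2(\Omega)}$ alone must fail on a bounded domain (constants are an immediate counterexample), the full $H^1$-norm on the right-hand side is genuinely needed, and I would carry it throughout. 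I would then treat the two dimensions separately, because the critical Sobolev exponent degenerates in 2D.

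For $d=3$ the argument is immediate. By interpolation of Lebesgue norms (H\"older's inequality) one has $\|u\|_{L^4(\Omega)}\le \|u\|_{L^2(\Omega)}^{1/4}\|u\|_{L^6(\Omega)}^{3/4}$, since $\frac14 = \frac14\cdot\frac12+\frac34\cdot\frac16$. Combining this with the Sobolev embedding $H^1(\Omega)\hookrightarrow L^6(\Omega)$, valid on bounded Lipschitz domains of $\R^3$, yields $\|u\|_{L^4(\Omega)}\le C\|u\|_{L^2(\Omega)}^{1/4}\|u\|_{H^1(\Omega)}^{3/4}$, which is the claim for $d=3$. No domain extension is needed, as the interpolation step is purely measure-theoretic.

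For $d=2$ I would first reduce to $\R^2$ via a Stein--Calder\'on extension operator $E$, which for a Lipschitz domain is a single linear map bounded simultaneously $L^2(\Omega)\to L^2(\R^2)$ and $H^1(\Omega)\to H^1(\R^2)$; by density it then suffices to prove the whole-space inequality for $w=Eu\in C_c^\infty(\R^2)$. The core step is the slicing estimate: writing $w(x_1,x_2)^2 = 2\int_{-\infty}^{x_1} w\,\partial_1 w\,\mathrm{d}s$ and analogously in the $x_2$-direction, one bounds $w^2$ pointwise by a function of $x_2$ alone and by a function of $x_1$ alone, multiplies the two bounds, integrates, and applies Fubini together with Cauchy--Schwarz to obtain $\|w\|_{L^4(\R^2)}^4 \le 4\|w\|_{L^2(\R^2)}^2\,\|\partial_1 w\|_{L^2}\,\|\partial_2 w\|_{L^2}\le C\|w\|_{L^2(\R^2)}^2\|\nabla w\|_{L^2(\R^2)}^2$. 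Taking square roots and transporting the bound back through $E$, using $\|u\|_{L^4(\Omega)}\le\|Eu\|_{L^4(\R^2)}$ together with the simultaneous $L^2$-- and $H^1$--boundedness of $E$, gives $\|u\|_{L^4(\Omega)}\le C\|u\|_{L^2(\Omega)}^{1/2}\|u\|_{H^1(\Omega)}^{1/2}$.

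The main obstacle is the two-dimensional case. The naive route---interpolating $L^4$ between $L^2$ and some $L^q$ and invoking $H^1\hookrightarrow L^q$---cannot reproduce the sharp $1/2$-exponent, since matching it forces $q=\infty$, and $H^1(\Omega)\not\hookrightarrow L^\infty(\Omega)$ in two dimensions (this is precisely the failure that the Brezis--Gallou\"et inequality of Lemma~\ref{Lemma::brezis_gallouet} is designed to repair). The genuine content therefore lies in the slicing estimate, and the one technical point to watch is that the extension operator be bounded on $L^2$ and $H^1$ at once, so that the separate $L^2$-- and $H^1$--factors survive the passage back to $\Omega$; an extension controlling only the $H^1$-norm would collapse the interpolation gain and yield merely the weaker bound $\|u\|_{L^4(\Omega)}\le C\|u\|_{H^1(\Omega)}$.
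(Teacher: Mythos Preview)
Your argument is correct. The $d=3$ case via H\"older interpolation between $L^2$ and $L^6$ combined with the Sobolev embedding $H^1(\Omega)\hookrightarrow L^6(\Omega)$ is clean, and the $d=2$ case via Stein extension plus the classical slicing estimate is exactly Ladyzhenskaya's original proof; your remark that the extension must be simultaneously bounded on $L^2$ and $H^1$ is the right technical point, and Stein's operator does satisfy this.

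The paper itself does not prove this lemma: it simply states it as a known special case of the Gagliardo--Nirenberg inequalities and attributes it to Ladyzhenskaya~\cite{ladyzhenskaia1959solution}. So there is no proof in the paper to compare against---your write-up supplies what the paper takes for granted. Your observation about why the naive $L^2$--$L^q$ interpolation route fails in 2D (it would require $q=\infty$, which is unavailable) is a nice piece of context that connects the lemma to the Brezis--Gallou\"et inequality used later in the paper.
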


\subsection{Generalization of the model}
To broaden the scope of our analysis, we can rewrite \eqref{eq:first_eq} as an integro-differential equation with an appropriate kernel as follows 
\begin{equation}\label{eq:first_eq_genk}
		\tau \psittt+(1+2k\psit) \psitt
-c^2 \Delta \psi - \tau c^2 \Delta\psit
- \delta \genk \Lconv\Delta \psitt=0.
\end{equation}

Concerning the kernel $\genk$ in \eqref{eq:first_eq_genk}, we make the following assumption
\begin{assumption}\label{assumption1}
	Let $T>0$. For all $y\in L^2(0, T; \Ltwo)$, it holds that for all $t\in(0,T)$
\begin{equation} 
	\begin{aligned}
		\int_0^{t} \intO \left(\genk\Lconv y \right)(s) \,y(s)\dxs\geq 	C_{\genk,T} 
		\int_0^{t} \|(\genk* y)(s)\|^2_{\Ltwo} \ds, 
	\end{aligned}
\end{equation}
where the constant $C_{\genk,T}>0$.
\end{assumption}
 Assumption \ref{assumption1} enforces the coercivity property of the memory term, and it is crucial in our analysis since it helps to extract a dissipative term that is necessary to control the norm of $\genk\Lconv \psi_{tt}$. This is used in two ways, to   bootstrap and obtain an estimate on $\psi_{ttt}$ (see \eqref{eq:partial_psittt} below) and then to pass to the limit in the weak form in Step (iv) of the proof of  Theorem~\ref{prop::local_well_posedness}. This assumption is standard when studying fractional PDEs. We refer the reader to, e.g., \cite[Section 5.2]{kaltenbacher2024limiting} for a discussion on how to verify this assumption.  In particular, it is verified for the Abel kernel~\eqref{eq:Abel_kernel}.

\section{Local-in-time well-posedness}\label{Section_local_wellp}
 We begin here by considering the local well-posedness of fJMGT 
 with Westervelt-type nonlinearity with a general kernel $\genk$ that satisfies 
 Assumption~\ref{assumption1}. Note that the local well-posedness of the equation studied here has been considered in the case $\genk = g_\alpha$  (with  $g_\alpha$ as in \eqref{eq:Abel_kernel})
 in \cite{kaltenbacher2022time}. Our goal here is to give an estimate of the lower bound of the final time of existence using Gronwall's type lemma. Our setting allows us to cover the inviscid (also known as critical) case of the Jordan--Moore--Gibson--Thompson {\color{violet} by setting $\genk=0$}.
 	
Let $T>0$. Consider the initial-boundary value problem 
\begin{equation}\label{eq:nonlinear_nonuniform}
	\left\{
	\begin{array}{ll}
		\tau \psittt+(1+2k\psit)\psitt
		-c^2 \Delta \psi - \tau c^2 \Delta\psit
		- \delta   \genk\Lconv \Delta \psitt = 0\qquad & \textrm{on } \Om\times(0,T), \\
		\psi = 0 & \textrm{on } \partial\Om\times(0,T),\\
		(\psi,\psit,\psitt)(0)= (\psi_0,\psi_1,\psi_2)&\textrm{on } \Om.
	\end{array} \right.
\end{equation}
 Let us introduce the space of solutions:
\begin{equation}\label{def:space_of_solutions}
	\begin{aligned}
	\mathcal{U}(0,T) = \Bigg\{u \in H^3(0,T; L^2(\Omega)) \cap W^{2,\infty}(0,T;H_0^1(\Om))& \cap   W^{1,\infty}(0,T;H^2(\Om)\cap H_0^1(\Om)) \Big|  \\
	&\sqrt\delta \genk \Lconv \D u_{tt} \in L^2(0,T;L^2(\Om))\Bigg\}.
	\end{aligned}
\end{equation}
We also define the following energy term 
\begin{equation}
\mathbf{E}[\psi](t):=\|\nabla\psitt(t)\|^2_{L^2(\Omega)}+  \|\D\psi(t)\|^2_{L^2(\Omega)}+  \|\D\psit(t)\|^2_{L^2(\Omega)}
\end{equation}
and its associated dissipation rate:
\begin{equation}
\mathbf{D}[\psi](t):=\intt \|\psittt\|_{L^2(\Om)}^2\ds + \delta \intt \| \genk \Lconv \D\psitt\|^2_{L^2(\Om)} \ds. 
\end{equation}
Hence, we have the following theorem.
\begin{theorem}\label{prop::local_well_posedness}  Let $\tau>0$, $\delta \geq 0$, and Assumption~\ref{assumption1} on $\genk$ hold. Assume that  $(\psi_0, \psi_1,\psi_2) \in H^2(\Om)\cap H_0^1(\Om)\times H^2(\Om)\cap H_0^1(\Om) \times H_0^1(\Om),$
	such that 
	\begin{equation}\label{N_0}
\|\psi_0\|_{H^2(\Omega)}^2 + \|\psi_1\|_{H^2(\Omega)}^2 + \|\psi_2\|_{H^1(\Omega)}^2 \leq N_0,
\end{equation}
	for some $N_0>0$. Then, there exits $T_*>0$ that only depends on $N_0$ such that the system \eqref{eq:nonlinear_nonuniform} admits a unique solution $\psi \in \mathcal{U}(0,t)$, at least up to $T_*$. Furthermore, for all $t \in [0,T_*]$, the following estimate holds:
 \begin{equation}
\mathbf{E}[\psi](t)+\mathbf{D}[\psi](t)\leq C(T_*)\mathbf{E}[\psi](0),
\end{equation}
where the constant $C(T_*)$ depends on $T_*$. 
\end{theorem}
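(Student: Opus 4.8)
The plan is to construct the solution by a semi-discrete Galerkin scheme, to derive energy estimates that are uniform in the discretization level, and then to pass to the limit; the final time $T_*$ will be read off from a nonlinear Gronwall argument, as announced in the introduction. First I would fix the orthonormal basis $\{w_j\}_{j\geq1}\subset H^2(\Om)\cap H_0^1(\Om)$ of eigenfunctions of the Dirichlet Laplacian, set $V_n=\mathrm{span}\{w_1,\dots,w_n\}$, and seek $\psi^n(t)=\sum_{j=1}^n \xi_j^n(t)\,w_j$ solving the projection of \eqref{eq:nonlinear_nonuniform} onto $V_n$, with $(\psi^n,\psi^n_t,\psi^n_{tt})(0)$ the $V_n$-projections of the data. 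Since the top-order term $\tau\psi^n_{ttt}$ can be inverted, the system for $\bxi=(\xi_j^n)_j$ is a nonlinear Volterra integro-differential system (the convolution $\genk\Lconv\D\psi^n_{tt}$ producing the memory kernel), and I would invoke the existence theory for such systems (e.g. \cite[Ch.~12]{gripenberg1990volterra}) to obtain a unique local solution on a maximal interval $[0,T_n)$, a priori depending on $n$.

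The core is a bound independent of $n$. I would test the discrete equation with $-\D\psi^n_{tt}$: the term $\tau\psi^n_{ttt}$ yields $\tfrac{\tau}{2}\tfrac{\mathrm d}{\mathrm dt}\|\nabla\psi^n_{tt}\|_{L^2}^2$, the term $\psi^n_{tt}$ yields the nonnegative $\|\nabla\psi^n_{tt}\|^2_{L^2}$, while $-\tau c^2\D\psi^n_t$ and $-c^2\D\psi^n$ give, after integration by parts in space and reorganization in time, the time derivatives of $\|\D\psi^n_t\|_{L^2}^2$ and $\|\D\psi^n\|_{L^2}^2$ up to cross terms handled by Young's inequality \eqref{Young_Inequality} (a second multiplier $-\D\psi^n_t$ carried with a small weight renders the resulting functional equivalent to $\mathbf E[\psi^n]$). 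Crucially, the memory term contributes $\delta\int_0^t\!\int_\Omega(\genk\Lconv\D\psi^n_{tt})\,\D\psi^n_{tt}$, which by Assumption~\ref{assumption1} (with $y=\D\psi^n_{tt}$) is bounded below by $\delta\, C_{\genk,T}\int_0^t\|\genk\Lconv\D\psi^n_{tt}\|_{L^2}^2$, exactly the memory part of $\mathbf D[\psi^n]$. The Westervelt nonlinearity $2k\psi^n_t\psi^n_{tt}$ tested against $-\D\psi^n_{tt}$ is integrated by parts and estimated with the embeddings $H^1\hookrightarrow L^4$ and $H^2\hookrightarrow L^\infty$, producing terms bounded by $C\,\mathbf E[\psi^n]^{3/2}$. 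To recover $\int_0^t\|\psi^n_{ttt}\|^2$ in $\mathbf D$, I would bootstrap from the equation, solving for $\psi^n_{ttt}$ and using the already-extracted control of $\genk\Lconv\D\psi^n_{tt}$ (this is where the distinction $\delta=0$ versus $\delta>0$ enters, the former simply dropping the memory contribution). Altogether this yields
\[
\mathbf E[\psi^n](t)+\mathbf D[\psi^n](t)\;\lesssim\;\mathbf E[\psi^n](0)+\int_0^t\big(\mathbf E[\psi^n](s)+\mathbf E[\psi^n](s)^{3/2}\big)\,\mathrm ds.
\]

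Because $\mathbf E[\psi^n](0)\leq C N_0$ uniformly in $n$, a nonlinear (Bihari-type) Gronwall comparison with the ODE $z'=C(z+z^{3/2})$ gives a time $T_*>0$ depending only on $N_0$, together with a bound $\sup_{[0,T_*]}\big(\mathbf E[\psi^n]+\mathbf D[\psi^n]\big)\leq C(T_*)\mathbf E[\psi^n](0)$ uniform in $n$; in particular $T_n\geq T_*$. These bounds place $\psi^n$ in bounded subsets of $W^{1,\infty}(0,T_*;H^2\cap H_0^1)$, $W^{2,\infty}(0,T_*;H_0^1)$ and $H^3(0,T_*;L^2)$, with $\sqrt\delta\,\genk\Lconv\D\psi^n_{tt}$ bounded in $L^2(0,T_*;L^2)$. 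I would then extract weak-$\ast$ limits and, via the Aubin--Lions lemma, strong convergence of $\psi^n_t$ (e.g. in $C([0,T_*];L^\infty)$ or $C([0,T_*];H^1)$), which suffices to pass to the limit in $\psi^n_t\psi^n_{tt}$. The memory term passes to the limit using its $L^2(L^2)$-bound together with the linearity of the convolution, which is precisely the step Assumption~\ref{assumption1} is designed to support. The initial conditions are recovered from the time continuity afforded by $\psi\in\mathcal U(0,T_*)$, and uniqueness follows from an energy estimate for the difference $w=\psi-\tilde\psi$ of two solutions, whose nonlinearity $2k(\psi_t\psi_{tt}-\tilde\psi_t\tilde\psi_{tt})$ is split and absorbed using the a priori bounds, closing a linear Gronwall inequality.

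I expect two delicate points. The first is keeping the energy functional coercive (equivalent to $\mathbf E$) in the critical regime $\delta=0$, where the Moore--Gibson--Thompson part provides no genuine dissipation, so that the growth of $\mathbf E$ is governed entirely by the superlinear nonlinear terms; controlling these via Bihari's inequality is what produces the explicit, data-dependent lower bound $T_*(N_0)$ rather than an arbitrarily small interval. The second, more technical obstacle is passing to the limit in the nonlocal term $\genk\Lconv\D\psi^n_{tt}$: since $\D\psi^n_{tt}$ is controlled only through the coercive quantity of Assumption~\ref{assumption1}, identifying the weak limit as $\genk\Lconv\D\psi_{tt}$ and ensuring the limit still obeys the estimate requires care, all the more so because the same control is reused in the bootstrap for $\psi_{ttt}$.
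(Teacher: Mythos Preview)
Your proposal is correct and follows essentially the same approach as the paper: Galerkin approximation on Laplacian eigenfunctions, local solvability of the semi-discrete system via nonlinear Volterra theory, testing with $-\Delta\psi^n_{tt}$, invoking Assumption~\ref{assumption1} for the memory term, estimating the Westervelt nonlinearity by the embeddings $H^1\hookrightarrow L^4$ and $H^2\hookrightarrow L^\infty$ to produce the $E^{3/2}$ term, a Bihari-type comparison with $z'=C(z+z^{3/2})$ to extract the $n$-independent lifespan $T_*(N_0)$, a bootstrap for $\psi_{ttt}$, Aubin--Lions compactness for the limit, and a lower-order energy estimate on the difference for uniqueness. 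The only cosmetic deviation is your suggestion of a second multiplier $-\Delta\psi^n_t$ to capture $\|\Delta\psi^n\|_{L^2}^2$ in the energy; the paper instead works with the reduced energy $E(t)=\tfrac\tau2\|\nabla\psi_{tt}\|^2+\tfrac{\tau c^2}2\|\Delta\psi_t\|^2$, handles the cross term $(\Delta\psi,\Delta\psi_{tt})$ by integration by parts in time, and recovers $\|\Delta\psi\|_{L^2}^2$ a posteriori via $\Delta\psi(t)=\Delta\psi_0+\int_0^t\Delta\psi_t$, which is an equivalent bookkeeping choice.
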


\begin{proof}
	We prove Theorem \ref{prop::local_well_posedness} using the Galerkin approximation and the energy method. The proof will follow five steps:
	\begin{itemize}
		\item[(i)] We construct a finite-dimensional approximate solutions using the  Galerkin method.
		\item[(ii)] We establish, on these approximate solutions uniform \emph{a priori} estimates;
	\item[(iii)] Exploiting the \emph{a priori} estimates, we give a lower bound on the final time of existence;
		\item[(iv)] We pass to the limit, including in the nonlinear terms, thanks to the compactness properties of the  space of solutions;
		\item[(v)] We show that the constructed solution is the unique solution of \eqref{eq:nonlinear_nonuniform} in $\mathcal{U}$.
	\end{itemize}
	
\subsection*{Step (i): Constructing approximate solutions}
The proof follows by a Galerkin procedure in which we construct approximate solutions of \eqref{eq:nonlinear_nonuniform} of the form:
$$
\psi^n(x,t)  = \sum_{k=1}^{n} \xi_i^n(t) v_i(x),
$$
for $n\geq1$, where $\{v_i\}_{i=1}^\infty$ are smooth eigenfunctions of the Dirichlet-Laplace operator.

Let $V_n = \operatorname{span}\{v_1,\ldots,v_n\}$.  The semi-discrete problem is given by:
\begin{subequations}
\begin{equation}\label{eq:semi_discrete_weak_form}
		\big(\tau \psittt^n+(1+2k_1\psit^n)\psitt^n
		-c^2 \Delta \psi^n - \tau c^2 \Delta\psit^n
		- \delta   \genk\Lconv \Delta \psitt^n ,v_j\big) = 0,
\end{equation}
for all $j\in\{1,\ldots,n\}$, with approximate initial conditions
\begin{eqnarray}
(\psi^n,\psit^n,\psitt^n)(0)= (\psi_0^n,\psi_1^n,\psi_2^n),
\end{eqnarray}
\end{subequations}
 taken as the $L^2(\Om)$ projection of $(\psi_0,\psi_1,\psi_2)$ onto $V_n$. Note that with this choice of approximate initial data, the following stability estimates hold:
 
\begin{equation}\label{eq:stability_initial_data}
	\|\D \psi_0^n\|_{L^2(\Omega)} \leq \|\D \psi_0\|_{L^2(\Omega)}, \quad \|\D \psi_1^n\|_{L^2(\Omega)} \leq \|\D \psi_1\|_{L^2(\Omega)}, \quad \|\nabla \psi_2^n\|_{L^2(\Omega)} \leq \|\nabla \psi_2\|_{L^2(\Omega)}
\end{equation}
for all $n\geq 1$.
This can be seen by writing down the definition of the $L^2(\Om)$ projection:
$$(\psi_0^n,\phi^n) = (\psi_0,\phi^n), \quad \textrm{ for all } \phi^n \in V_n.$$
Since $V_n$ is spanned by smooth eigenfunctions of the Dirichlet Laplacian, $\phi^n = \D^2 \psi_0$ is a valid test function. Integrating by parts and using Cauchy-Schwartz inequality, yields the desired estimate. The other estimates  are shown to hold in a similar fashion.	

The resulting is a system of Volterra equations of the second kind for the vector quantity 
$$\chi = [{\xi_1^n},\ldots,{\xi_n^n},{\xi_1^n}_{t},\ldots,{\xi_n^n}_{t}, {\xi_1^n}_{tt},\ldots,{\xi_n^n}_{tt} ].$$ 
In particular the equation for $\chi : \R^+ \mapsto \R^{3n}$ reads as:
\begin{equation}\label{eq:chi_equation}
	\chi_t = H(\chi) (t), \qquad \qquad \chi(0) = \chi_0,
\end{equation}
where $\chi_0$ is the concatenation of the coefficient of $(\psi_0^n,\psi_1^n,\psi_2^n)$ in $V_n$. The function $H(\chi)(\cdot)$ is inferred from the relationship between the first $2n$ components of $\chi_t$ and the last $2n$ of $\chi$. For the last $n$ components, the form of $H(\chi)(\cdot)$ is inferred from \eqref{eq:semi_discrete_weak_form}.
 In other words, the function $H(\chi)(\cdot) = \textup{L} \chi + \textup{NL}(\chi)$ is the sum of a linear part 
	\[\textup{L} \chi = \begin{bmatrix}
		0_n & I_n & 0_n \\ 
		0_n & 0_n & I_n \\
		-\frac{c^2}\tau K & c^2 K  & -\frac1\tau M  \\
		\end{bmatrix} \chi + \frac\delta\tau \begin{bmatrix}
		0_n & 0_n & 0_n \\ 
		0_n & 0_n & 0_n \\
		0_n & 0_n &  K   \\
	\end{bmatrix} \genk \Lconv \chi , \]
	where $0_n$ and $I_n$ are the $n\times n$ null and identity matrices respectively,  
	and a nonlinear (quadratic) part 
	\[\textup{NL}(\chi)  = \frac1\tau \chi^T\begin{bmatrix}
		0_n & 0_n & 0_n \\ 
		0_n & 0_n & 0_n \\
		 0_n& -2k_1M & 0_n \\
	\end{bmatrix} \chi . \] 
Above $M$ and $K$ are the mass and stiffness matrix defined by:
\[ M_{i,j} = \int_\Omega v_i \ v_j \dx, \qquad  K_{i,j} = \int_\Omega \nabla v_i \cdot \nabla v_j \dx, \]
for $1 \leq i,j \leq n$.

Existence theory of systems of nonlinear Volterra equations yields the existence of a unique solution $\chi \in W^{1,1}(0,t_n;V_n)$ up to time $t_n$; see \cite[Chapter 12, Theorem 3.1]{gripenberg1990volterra}. The \emph{a priori} estimates will show that $t_n \geq T_*(N_0)$.

\subsection*{Step (ii): Establishing uniform estimates}
We establish here \emph{a priori} estimates of the solutions that are uniform with respect to the approximation level $n$.
In what follows, we omit the superscript $n$ to simplify notation. Furthermore, let us introduce the following energy 
\begin{equation}\label{eq:Energy_def}
	E(t) :=  \frac\tau2 \|\nabla\psitt(t)\|^2_{L^2(\Omega)}+\frac{\tau c^2}2  \|\D\psit(t)\|^2_{L^2(\Omega)}.
\end{equation}

Let $T>0$ be an arbitrary final time, which will be refined in Step (iii) of this proof to establish a lower bound on the final time of existence. 
Let $t\in[0,T]$. We begin by testing \eqref{eq:nonlinear_nonuniform}  with $-\D \psitt$ and integrating over $(0,t)$, to obtain 
\begin{equation}\label{eq:first_estimate_linearized}
	\begin{aligned}
		E(t)+ \intt \|\nabla \psitt\|_{L^2(\Om)}^2\ds + \intt \delta(\genk\Lconv \D\psitt, \D\psitt) \ds \\
		=E(0) - \intt   (c^2\D\psi - 2k_1\psit \psitt,\D\psitt)_{L^2(\Omega)}\ds.
	\end{aligned}
\end{equation}

We will show in what follows how to control each of the arising terms in \eqref{eq:first_estimate_linearized}. First, using Assumption~\ref{assumption1} on the kernel, it holds  that 
\begin{equation}\label{Est_Conv}
\intt (\genk\Lconv \D\psitt, \D\psitt) \ds \geq C_{\genk,T}   \intt \| \genk \Lconv \D\psitt\|^2_{L^2(\Om)} \ds.
\end{equation}
Furthermore, using integration by parts, we get
\begin{equation}
	\intt  (\psit \psitt, \D\psitt)_{L^2(\Omega)} \ds =  - \intt  (\nabla\psit \psitt,\nabla\psitt)_{L^2(\Omega)} \ds - \intt  (\psit \nabla\psitt,\nabla\psitt)_{L^2(\Omega)} \ds.
\end{equation}
By H\"older's inequality, we infer that 
\begin{equation}\label{eq:psit_psitt_product}
	\begin{aligned}
	&\left| \intt  (\nabla\psit \psitt,\nabla\psitt)_{L^2(\Omega)} \ds + \intt  (\psit \nabla\psitt,\nabla\psitt)_{L^2(\Omega)} \ds\right| \\
 &\leq  \intt \|\nabla \psit \|_{L^4(\Om)} \| \psitt \|_{L^4(\Om)} \|\nabla \psitt \|_{L^2(\Om)}\ds + \intt \|\psit\|_{L^{\infty}(\Omega)} \|\nabla\psitt\|^2_{L^2(\Om)}\ds.
	\end{aligned}
\end{equation}

Note that the approximate solutions $\psi$ belong to $H^2(\Om)\cap H^1_0(\Om)$.   Hence,  by standard Sobolev embeddings and elliptic regularity results, the following inequalities hold:
		 \begin{equation}\label{some_inequalities}
		 	\begin{aligned}
		 	\|\nabla \psit \|_{L^4(\Om)} &\lesssim \|\nabla \psit \|_{H^1(\Om)} \lesssim \|\D \psit \|_{L^2(\Om)},\\
		  \|\psitt \|_{L^4(\Om)} &\lesssim \|\psitt \|_{H^1(\Om)} \lesssim \|\nabla \psitt \|_{L^2(\Om)}, \\
 \|\psit\|_{L^{\infty}(\Omega)} &\lesssim \|\psit\|_{H^{2}(\Omega)}\lesssim \|\D\psit\|_{L^{2}(\Omega)}.
 \end{aligned}
\end{equation} 
 Using \eqref{some_inequalities} and Young's inequality with appropriate exponents, we can bound \eqref{eq:psit_psitt_product} as follows
\begin{equation}
	\begin{aligned}
		\Big|\intt  (\psit \psitt,\D\psitt)_{L^2(\Omega)} \ds\Big|&
		\lesssim   \intt E^{3/2}(s)\ds.
	\end{aligned}
\end{equation} 
On the other hand, we have
\begin{align}\label{Linear_Term_0}
	-\intt  (\D\psi,\D\psitt)_{L^2(\Omega)}\ds = \intt (\D\psit,\D\psit)_{L^2(\Omega)} \ds - (\D\psi,\D\psit)_{L^2(\Omega)}\big|_0^t.
\end{align}
The first term on the right-hand side of \eqref{Linear_Term_0} is estimated by
\begin{equation}\label{Linear_Term}
\intt (\D\psit,\D\psit)_{L^2(\Omega)} \ds \lesssim \int_0^t E(s)\ds.
\end{equation}
Additionally,
using the identity 
\begin{equation}
\Delta\psi(t)=\Delta\psi_0+\int_0^t\Delta\psi_t(s)\ds,
\end{equation}
together with the $\varepsilon$-Young inequality, 
 we can estimate the last term in \eqref{Linear_Term_0} as follows: 
\begin{align}
	(\D\psi,\D\psit)_{L^2(\Omega)}\big|_0^t \leq & \frac{1}{2\varepsilon} \left(t \intt \|\D \psi_t\|^2_{L^2(\Om)}\ds + \|\D \psi(0)\|^2_{L^2(\Om)}\right) \\&
	+ \varepsilon \|\D \psit\|_{L^2(\Om)}^2 + \frac12 \left(\|\D \psi (0)\|^2_{L^2(\Om)} + \|\D \psi_t (0)\|^2_{L^2(\Om)}\right),
\end{align}  
where $\varepsilon>0$ is an arbitrary constant. Inserting the above estimate into \eqref{eq:first_estimate_linearized} and selecting $\varepsilon$ small enough, we get  
\begin{equation}\label{eq:first_estimate_linearized_Main}
	\begin{aligned}
		 E(t)+ \intt \|\nabla \psitt\|_{L^2(\Om)}^2\ds
		 + \delta C_{\genk,T} \intt \| \genk \Lconv \D\psitt\|^2_{L^2(\Om)} \ds \\
		\leq  CN_0
		+ C(T) \intt E^{3/2}(s) \ds+\int_0^t E(s)\ds, 
	\end{aligned}
\end{equation}

where $N_0$ is the size of the initial data defined in \eqref{N_0}.

\subsection*{Step (iii): Lower bound on the final time of existence}
 It remains for us to show that inequality \eqref{eq:first_estimate_linearized_Main} implies that the energy $E$ remains bounded up to a $n$-independent final time $T_\textup{max}$ ($n$ being the Galerkin approximation level).  Recall that $T>0$  can be arbitrarily large. In what follows, we show that the lowest bound on the final time of existence is $\min\{T,T_0(N_0,T)\}$ where $T_0(N_0,T)$ depends on the size of the initial data and is defined in \eqref{def:T0N0} below. 
To achieve this goal, we use Gronwall's lemma; see, e.g., \cite[p. 47]{john1990nonlinear}. In particular, inequality~\eqref{eq:first_estimate_linearized_Main} shows that  $$E(t)\leq z(t),$$
where $z(t)$ is the solution of
	 \begin{equation}\label{eqn:bound_equation}
			z'=z(s)+ C(T)z(s)^\frac32,
		\end{equation}
		with $z(0) = z_0= CN_0$.
		By setting $u = - z^{-\frac12}$ ($z$ non-negative), we can write the equation for $u$ as
		  \begin{equation}
			 	2 u' + u = C(T),
			 \end{equation}
		with $u(0) = u_0= - z_0^{-\frac12} $, which can be solved by 
		$u :t \longmapsto u_0 e^{-t/2} - C(T)(e^{-t/2}-1)$.
		Then the solution to equation \eqref{eqn:bound_equation} can be expressed as:
			\begin{equation}   
					z = \frac{1}{(-{z_0^{-1/2}}e^{-t/2} +C(T )(1-e^{-t/2}))^2}.
	\end{equation}		
This implies that $E(t)$ remains bounded for $t \in (0,T_*)$ with $$T_* = \min\{T, T_0(N_0,T)\}>0$$ 
where 
\begin{equation}\label{def:T0N0}
	T_0(N_0,T)= 2 \log \left(\frac{z_0^{-1/2}+C(T)}{C(T)}\right).
\end{equation}
In fact, $T_*$ can be made independent of $T$; see Remark~\ref{remark:final_time} for a comment on how to find the optimal lowest bound of the final time of existence $T_*$ by changing the choice of $T$. Henceforth, we use the so-constructed final time of existence $T_*(N_0)$ in the estimates and Galerkin limits below.

\subsection*{Estimates on \texorpdfstring{$\D\psi$}{D psi} and \texorpdfstring{$\psittt$}{psi ttt}}
 Before passing to the limit, we extract two additional estimates for $\D\psi$ and $\psittt$. First, it is straightforward to obtain an estimate on $\psi$ by using the fact that 
 \begin{equation}
     \psi(t)=\psi_0+\int_0^t \psi_t(s)\ds. 
 \end{equation}
 Hence, this yields 
	\begin{equation} \label{eq:Dpsi_estimate}
		\|\D \psi(t)\|^2_{L^2(\Om)} \leq 2 \left(t^2 \sup_{0\leq s\leq t} \|\D \psi_t(s)\|^2_{L^2(\Om)} + \|\D \psi(0)\|^2_{L^2(\Om)}\right). 
	\end{equation}
Furthermore, by considering:
$$\tau \psittt= - (1+2k_1\psit)\psitt
+c^2 \Delta \psi + \tau c^2 \Delta\psit
+ \delta   \genk\Lconv \Delta \psitt,$$
we can estimate 
\begin{equation}\label{eq:partial_psittt}
	\begin{aligned}
	\|\psittt\|_{L^2(0,t;L^2(\Om))} \lesssim& (1+\|\psit\|_{L^\infty(0,t;L^\infty(\Om))})\|\psitt\|_{L^2(0,t; L^2(\Om))} \\&+ \|\D \psi\|_{L^2(0,t;L^2(\Om))} + \|\D \psit\|_{L^2(0,t;L^2(\Om))} + \|\genk \Lconv \D \psitt\|_{L^2(0,t;L^2(\Om))} \\
	\lesssim& (1+\|\D\psit\|_{L^\infty(0,t;L^2(\Om))})\|\nabla \psitt\|_{L^2(0,t;L^2(\Om))} \\&+ \|\D \psi\|_{L^2(0,t;L^2(\Om))} + \|\D \psit\|_{L^2(0,t;L^2(\Om))} + \|\genk \Lconv \D \psitt\|_{L^2(0,t;L^2(\Om))},
	\end{aligned}
\end{equation}
where  we have used the elliptic regularity estimate $\|\psit\|_{L^\infty(0,t;L^\infty(\Om))} \lesssim \|\D \psit\|_{L^\infty(0,t;L^2(\Om))}$ and the Poincar\'e--Friedrichs inequality $\|\psitt\|_{L^2(0,t; L^2(\Om))} \lesssim \|\nabla \psitt\|_{L^2(0,t;L^2(\Om))}$.

Combining \eqref{eq:first_estimate_linearized_Main}, \eqref{eq:Dpsi_estimate} and \eqref{eq:partial_psittt} yields the desired estimate:
\begin{equation}
\mathbf{E}[\psi](t)+\mathbf{D}[\psi](t)\leq C(T_*)\mathbf{E}[\psi](0). 
\end{equation}

\subsection*{Step (iv): Passing to the limit}
From the previous estimates, we know that the sequence of approximate solutions $\psi^n$ (where we stress again the dependence on the Faedo--Galerkin approximation level $n$) stays bounded uniformly with respect to $n$. In particular, we have the following weak(-$\star$) convergence of a subsequence (which we do not relabel):
 \begin{alignat}{2}
	\psi^{n} &\stackrel{}{\relbar\joinrel\rightharpoonup} \psi \quad &&\textrm{weakly-$*$ in } L^\infty(0,T_*; H^2(\Om)\cap H_0^1(\Om)),\\
	\psit^{n} &\stackrel{}{\relbar\joinrel\rightharpoonup} \psit \quad &&\textrm{weakly-$*$ in }L^\infty(0,T_*; H^2(\Om)\cap H_0^1(\Om)), \\
	\psitt^{n} &\stackrel{}{\relbar\joinrel\rightharpoonup} \psitt \quad &&\textrm{weakly-$*$ in }L^\infty(0,T_*; H_0^1(\Om)), \\
	\psittt^{n} &\stackrel{}{\relbar\joinrel\rightharpoonup} \psittt \quad &&\textrm{weakly \, \, in }L^2(0,T_*; L^2(\Om)).\\
\end{alignat}
Using the well-known Rellich--Kondrachov theorem and Aubin--Lions--Simon lemma~\cite[Thoerem 5]{simon1986compact}, we know that there exist a strongly convergent subsequence (again not relabeled) in the following sense
 \begin{alignat}{2}\label{eq:strong_convergence}
	\psi^{n} &\longrightarrow \psi \quad &&\textrm{strongly in } C([0,T_*]; H_0^1(\Om)),\\
	\psit^{n} &\longrightarrow \psit \quad &&\textrm{strongly in }C([0,T_*]; H_0^1(\Om)), \\
	\psitt^{n} &\longrightarrow \psitt \quad &&\textrm{strongly in }C([0,T_*]; L^2(\Om)). \\
\end{alignat}

Additionally, since $\sqrt{\delta}\genk \Lconv \D\psitt^n$ and $\psit^n\psitt^n$ are bounded in $L^2(0,T_*;L^2(\Om))$ and $L^\infty(0,T_*;L^2(\Om))$, we infer that 
 \begin{alignat}{2}
	\sqrt{\delta} \genk \Lconv \D\psitt^n &\stackrel{}{\relbar\joinrel\rightharpoonup} w \quad &&\textrm{weakly \,\, in } L^2(0,T_*; L^2(\Om)),\\
	\psit^n\psitt^n &\stackrel{}{\relbar\joinrel\rightharpoonup} y \quad &&\textrm{weakly-$*$ in }L^\infty(0,T_*; L^2(\Om)).
\end{alignat}
To show that $w= \sqrt{\delta} \genk \Lconv \D\psitt$ weakly, let $\phi\in C([0,T_*]; H^2(\Omega) \cap H^1_0(\Om))$ and $n\geq1$. Then, integration over time and space yields
\[ \inttO \sqrt{\delta}
\genk \Lconv \D\psitt^n \phi \dxs = \inttO \sqrt{\delta} \genk \Lconv \psitt^n \D\phi \dxs,\]
using the strong convergence of $\psitt^n\longrightarrow\psitt$ and passing to the limit in $n$ yields the desired result.
Similarly $y = \psit\psitt$ weakly using the fact that $\psit^n\psitt^n$ is the product of a weakly convergent sequence in $L^\infty(0,T_*; H^2(\Om)\cap H_0^1(\Om))\hookrightarrow L^\infty(0,T_*; L^\infty(\Om)$ and a strongly convergent sequence in $C([0,T_*]; L^2(\Om))$.
 
This is enough to show that the limit $\psi \in \mathcal{U}(0,T_*)$ solves the weak form 
\begin{equation}
	\intt\big(\tau \psittt+(1+2k_1\psit)\psitt
	-c^2 \Delta \psi - \tau c^2 \Delta\psit
	- \delta   \genk\Lconv \Delta \psitt, v\big) \ds = 0,
\end{equation}
for all $v \in L^2(0,T_*;L^2(\Om))$.
Attainment of initial data of the constructed solution is guaranteed by the combination of the strong convergence of $\psi,\, \psit, \psitt$ and the convergence of the $L^2(\Om)$ projections of the initial data.

\subsection*{Step (v): Uniqueness of solutions} 
Let $\psi^1$, $\psi^2 \in \mathcal{U}(0,T_*)$  be two solutions of \eqref{eq:nonlinear_nonuniform} and denote their difference by $\bar\psi = \psi^1-\psi^2$. Then $\bar \psi \in \mathcal{U}(0,T_*)$ must solve 
\begin{equation}
	\intt\big(\tau \bar\psi_{ttt}+(1+2k_1\psit^1)\bar\psi_{tt}
	-c^2 \Delta \bar\psi - \tau c^2 \Delta\bar\psi_{t}
	- \delta   \genk\Lconv \Delta \bar\psi_{tt}, v\big) \ds= \intt(- 2k_1 \bar\psi_t \psitt^2,v)\ds,
\end{equation}
for all $v \in L^2(0,T;L^2(\Om))$, with $(\bar\psi, \bar\psi_t, \bar\psi_{tt})(0) = (0,0,0)$.

Note that $v = \bar\psi_{tt}$ is a valid test function, which yields after computation
\begin{equation}\label{psi_bar_Energy}
	\begin{aligned}
		\frac\tau2 \|\bar\psi_{tt}(t)\|^2_{L^2(\Omega)}+ \intt \|\bar\psi_{tt}\|_{L^2(\Om)}^2\ds
		+\frac{\tau c^2}2  \|\nabla \bar\psi_t(t)\|^2_{L^2(\Omega)} + \intt \delta(\genk\Lconv \nabla\bar\psi_{tt}, \nabla\bar\psi_{tt}) \ds \\
		= \intt   (c^2\D\bar\psi -  2k_1\psit^1 \bar\psi_{tt} - 2k_1 \bar\psi_t \psitt^2,\bar\psi_{tt})_{L^2(\Omega)}\ds.
	\end{aligned}
\end{equation}
First, we have by Assumption~\ref{assumption1}
\[\intt \delta(\genk\Lconv \nabla\bar\psi_{tt}, \nabla\bar\psi_{tt}) \ds \geq 0.\]

Similarly to the arguments employed in establishing the energy bounds in step (ii), we intend to show that the terms on the right-hand side can be controlled by the left-hand side. To this end, we integrate the first term by parts in both space and time, we obtain 
\begin{align}
	\intt  (\D\bar\psi,\bar\psi_{tt})_{L^2(\Omega)}\ds = \intt (\nabla\bar\psi_t,\nabla\bar\psi_t)_{L^2(\Omega)} \ds - (\nabla\bar\psi,\nabla\bar\psi_t)_{L^2(\Omega)}\big|_0^t.
\end{align}

Further we use the fact  that $\bar\psi(0)= \bar\psi_t(0) = 0$ to estimate
\begin{align}\label{epsilon_tesm}
(\nabla\bar{\psi},\nabla\bar{\psit})_{L^2(\Omega)}\big|_0^t \leq & \frac{1}{2\varepsilon} t \intt \|\nabla \bar{\psi}_t\|^2_{L^2(\Om)}\ds  
	+ \varepsilon \|\nabla \bar{\psit}\|_{L^2(\Om)}^2 
\end{align}
where $\varepsilon>0$ is chosen small enough so that the second term on the right-hand side of \eqref{epsilon_tesm} is absorbed by the left-hand side of \eqref{psi_bar_Energy}.

Further, we have
\begin{equation}
	\begin{aligned}
\intt   (\psit^1 \bar\psi_{tt} + \bar\psi_t \psitt^2,\bar\psi_{tt})_{L^2(\Omega)}\ds \leq &	\intt \|\psit^1\|_{L^\infty(\Omega)}\|\bar \psi_{tt}\|^2_{L^2(\Omega)} \ds\\ &+ \intt\|\psitt^2\|_{L^4(\Omega)} \|\bar{\psi}_t\|_{L^4(\Omega)}\|\bar\psi_{tt}\|_{L^2(\Omega)}  \ds \\ 
\leq &	\intt \|\psit^1\|_{H^2(\Omega)}\|\bar \psi_{tt}\|^2_{L^2(\Omega)} \ds\\ &+ \frac12 \intt\|\psitt^2\|_{H^1(\Omega)} \left(\|\nabla\bar{\psi}_t\|^2_{L^2(\Omega)} + \|\bar\psi_{tt}\|^2_{L^2(\Omega)} \right) \ds,
\end{aligned}
\end{equation}
where in the last line, we have made use of Sobolev embeddings and elliptic regularity results.

Since $\psi^1$, $\psi^2 \in \mathcal{U}$, then necessarily $t \mapsto \|\psitt^2(t)\|_{H^1(\Omega)}$ and $t \mapsto \|\psit^1(t)\|_{H^2(\Omega)}$ are functions in $L^1(0,T_*)$. This allows us to make use of the classical Gronwall's inequality~(see, e.g., \cite[Chapter XVIII, \S 5, Lemma 1]{dautray2000mathematicalVolume5}) to conclude that $\bar \psi = 0$. Thus ensuring uniqueness of solutions.
\end{proof}

\begin{remark}[Sharpness of the final time of existence estimate]\label{remark:final_time}
	In the course of the proof of Theorem~\ref{prop::local_well_posedness}, we provided an estimate for the final time of existence by means of picking $T>0$ and calculating $T_0=T_0(N_0,T)$ according to \eqref{def:T0N0}. The minimal final time of existence is then given as the $\min\{T, T_0(N_0,T)\}$. Graphically, the optimal choice of $T$ corresponds to the abscissa of the intersection of the red and blue lines in Figure~\ref{fig:plot_T}.  
		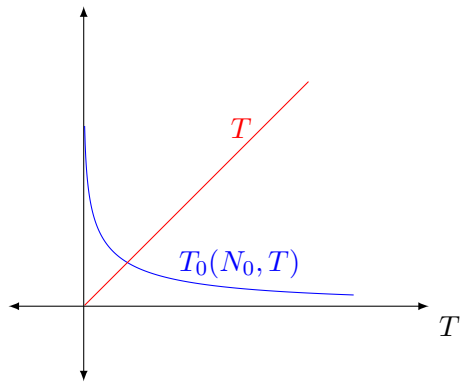
\begin{figure}[ht]
			\centering
		\begin{tikzpicture}[]
			\begin{axis}[width=3in,axis equal image,clip=false,
				axis lines=middle,
				xmin=-0,xmax=3,
				domain=-0:3, samples=201,
				xlabel=$T$,ylabel=$$,
				ymin=-0,ymax=2.5,
				restrict y to domain=-0:2.5,
				enlargelimits={abs=1cm},
				axis line style={latex-latex},
				ticklabel style={font=\tiny,fill=white},
				xtick={\empty},ytick={\empty},
				xlabel style={at={(ticklabel* cs:1)},anchor=north west},
				ylabel style={at={(ticklabel* cs:1)},anchor=south west}
				]
				\addplot[samples=501,domain=0.01:3,blue] {log10((x + 1)/x)} node[above,pos=0.7]{$T_0(N_0,T)$};
				\addplot[samples=400,domain=0.01:3, red] {x} node[above,pos=0.7]{$T$};
			\end{axis}
		\end{tikzpicture}
	\caption{Plot of $T$ (red) and $T_0(N_0,T)$ (blue) as a function of $T$. }
	\label{fig:plot_T}
	\end{figure}

By setting
 \[T_*(N_0) = \sup_{T>0} \min\{T, T_0(N_0,T)\},\] the estimate of the final time of existence only depends on the initial data size $N_0$ specified in \eqref{N_0}.
\end{remark}

\section{Global existence in 2D}\label{Section_Cond_reg}
We establish in this section a criterion for the extension of the solution beyond the critical time $T_*(N_0)$ identified in Theorem~\ref{prop::local_well_posedness} along the lines of, e.g., Beale--Kato--Majda argument for the 3-D Euler equation established in~\cite{beale1984remarks}. 
Theorem~\ref{thm:Beale_Kato_Majda_criterion} covers, like the rest of this work, the critical case $\delta = 0$.

In this section, we focus mainly on the 2-dimensional case. This is due to the availability of the crucial Brezis--Gallou\"et inequality of Lemma~\ref{Lemma::brezis_gallouet}. This inequality translates the idea that in 2D, the $H^1(\Omega)$ norms provides ``almost" control of the $L^\infty(\Omega)$ norm, i.e., we have the embedding $H^1(\Omega)\hookrightarrow L^p(\Omega)$ for all $1 \leq p<\infty$, such that the $H^2(\Omega)$-norm only needs to be accounted for in a logarithmic way. The one-dimensional case follows from the classical embedding $H^1(\Omega) \hookrightarrow L^\infty(\Omega)$.

\begin{theorem}\label{thm:Beale_Kato_Majda_criterion}
	Let $\Omega$ be a bounded Lipschitz domain in  $\R^d$, $d=1,2$ and let Assumption~\ref{assumption1} on $\genk$ hold. Assume that $\tau>0, \, \delta\geq 0$ and let $\psi$ be a solution to \eqref{eq:nonlinear_nonuniform} and suppose that there is a time $\bar T$ such that the solution cannot be continued in $\mathcal{U}$ to $T=\bar T$. Assume that $\bar T$ is the first such time. Then 
	\begin{equation}
		\limsup_{t\uparrow \bar T} \|\psi_t\|_{H^1(\Omega)} + \|\psi_{tt}\|_{L^2(\Omega)} = \infty.
	\end{equation}
\end{theorem}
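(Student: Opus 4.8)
The plan is to establish a continuation criterion by contradiction. Suppose the conclusion fails, so that $\limsup_{t\uparrow\bar T}\big(\|\psit\|_{H^1(\Om)}+\|\psitt\|_{L^2(\Om)}\big)<\infty$. Since $t\mapsto(\psit(t),\psitt(t))$ is bounded on every compact subinterval of $[0,\bar T)$ (the solution being in $\mathcal{U}$), there is a constant $M>0$ with $\|\psit(t)\|_{H^1(\Om)}+\|\psitt(t)\|_{L^2(\Om)}\leq M$ for all $t\in[0,\bar T)$. The goal is to show that this single scalar bound forces the full energy $E(t)$ from \eqref{eq:Energy_def}, and hence the entire $\mathcal{U}(0,t)$-norm of $\psi$, to stay bounded up to $\bar T$. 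Because the local existence time $T_*$ delivered by Theorem~\ref{prop::local_well_posedness} depends only on the size of the initial data, a uniform bound on the state $(\psi,\psit,\psitt)(t)$ in $\big(H^2(\Om)\cap H_0^1(\Om)\big)\times\big(H^2(\Om)\cap H_0^1(\Om)\big)\times H_0^1(\Om)$ near $\bar T$ lets me re-initialize the solver at a time $\bar T-\epsilon$ and extend the solution strictly past $\bar T$, contradicting the minimality of $\bar T$.

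The heart of the matter is a refined energy estimate. I would redo the computation of Step~(ii) of Theorem~\ref{prop::local_well_posedness} --- testing the equation with $-\Delta\psitt$ (rigorously on the Galerkin approximation, then passing to the limit) to reach the identity \eqref{eq:first_estimate_linearized} --- but re-estimate the nonlinear contribution $\intt(2k\psit\psitt,\Delta\psitt)\ds$ with the aid of $M$. After the integration by parts in \eqref{eq:psit_psitt_product}, two pieces survive. For $\intt\|\nabla\psit\|_{L^4}\|\psitt\|_{L^4}\|\nabla\psitt\|_{L^2}\ds$ I would apply the Ladyzhenskaya inequality (Lemma~\ref{lemma:ladyzhenskaya_inequality}) as $\|\nabla\psit\|_{L^4}\lesssim\|\nabla\psit\|_{L^2}^{1/2}\|\nabla\psit\|_{H^1}^{1/2}\lesssim M^{1/2}\|\Delta\psit\|_{L^2}^{1/2}$ and $\|\psitt\|_{L^4}\lesssim\|\psitt\|_{L^2}^{1/2}\|\psitt\|_{H^1}^{1/2}\lesssim M^{1/2}\|\nabla\psitt\|_{L^2}^{1/2}$, so that after Young's inequality \eqref{Young_2} this piece is bounded by $CM\intt E(s)\ds$, i.e. \emph{linearly} in $E$. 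The genuinely dangerous piece $\intt\|\psit\|_{L^\infty}\|\nabla\psitt\|_{L^2}^2\ds$ is where the Brezis--Gallou\"et inequality enters: for $d=2$, Lemma~\ref{Lemma::brezis_gallouet} combined with elliptic regularity $\|\psit\|_{H^2}\lesssim\|\Delta\psit\|_{L^2}\lesssim E^{1/2}$ yields $\|\psit\|_{L^\infty}\lesssim M\sqrt{\ln(1+E)}+1$, whence this piece is controlled by $C\intt\big(1+M\sqrt{\ln(1+E(s))}\big)E(s)\ds$. (For $d=1$ one uses $H^1(\Om)\hookrightarrow L^\infty(\Om)$ directly, so $\|\psit\|_{L^\infty}\lesssim M$ and both pieces are linear in $E$.) The linear-in-$\psi$ terms coming from $c^2\Delta\psi$ are treated exactly as in Theorem~\ref{prop::local_well_posedness}.

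Assembling these bounds produces an Osgood-type inequality
\[E(t)\leq C\,E(0)+C\intt\big(1+M\sqrt{\ln(1+E(s))}\big)E(s)\ds,\]
and I would close it with a logarithmic Gronwall/Osgood comparison. The associated scalar ODE $z'=C\big(1+M\sqrt{\ln(1+z)}\big)z$ satisfies the Osgood non-blow-up condition $\int^\infty \mathrm{d}z/\big(z\sqrt{\ln(1+z)}\big)=\infty$; concretely $\sqrt{\ln(1+z)}$ grows at most linearly in $t$, so $z(t)\lesssim\exp(Ct^2)$ stays finite on every bounded interval, and therefore $E$ remains bounded on all of $[0,\bar T]$. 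This is precisely the point of invoking Brezis--Gallou\"et: it downgrades the $E^{3/2}$ growth of Theorem~\ref{prop::local_well_posedness} --- which \emph{does} allow finite-time blow-up --- to the logarithmically superlinear growth above, which does not. From the bound on $E$ I then recover the remaining components of the $\mathcal{U}$-norm: $\|\Delta\psi\|_{L^2}$ via $\Delta\psi(t)=\Delta\psi_0+\intt\Delta\psit\ds$ as in \eqref{eq:Dpsi_estimate}, and $\|\psittt\|_{L^2(0,t;L^2(\Om))}$ together with $\|\genk\Lconv\Delta\psitt\|_{L^2(0,t;L^2(\Om))}$ from the bootstrap \eqref{eq:partial_psittt}.

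Finally, these uniform bounds show that $(\psi,\psit,\psitt)(t)$ stays bounded in $\big(H^2(\Om)\cap H_0^1(\Om)\big)\times\big(H^2(\Om)\cap H_0^1(\Om)\big)\times H_0^1(\Om)$ by some $\tilde N_0$ independent of $t$ as $t\uparrow\bar T$. Applying Theorem~\ref{prop::local_well_posedness} with initial time $\bar T-\epsilon$ and $\epsilon<T_*(\tilde N_0)$ yields a solution on an interval reaching strictly beyond $\bar T$; gluing it to the already-constructed solution continues $\psi$ in $\mathcal{U}$ past $\bar T$, contradicting that $\bar T$ is the first time at which continuation fails. I expect the main obstacles to be (i) carrying out the logarithmic Gronwall step cleanly, and (ii) ensuring the Brezis--Gallou\"et estimate contributes only a $\sqrt{\ln(1+E)}\,E$ term --- since the single power of $\psit$ in $\psit|\nabla\psitt|^2$ gives exactly one factor of $\|\psit\|_{L^\infty}$, and it is this mildness (not $\ln(1+E)\,E$ or worse) that guarantees the absence of finite-time blow-up.
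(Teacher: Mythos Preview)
Your proposal is correct and follows essentially the same route as the paper: test with $-\Delta\psitt$, split the nonlinear term via \eqref{eq:psit_psitt_product}, apply Ladyzhenskaya (Lemma~\ref{lemma:ladyzhenskaya_inequality}) to the $L^4$--$L^4$ piece and Brezis--Gallou\"et (Lemma~\ref{Lemma::brezis_gallouet}) to the $\|\psit\|_{L^\infty}\|\nabla\psitt\|_{L^2}^2$ piece, and close with a logarithmic Gronwall argument yielding the double-exponential bound $E(t)\lesssim\exp(Ct^2)$. Your framing as a contradiction argument with an explicit re-initialization step using the lower bound $T_*(N_0)$ from Theorem~\ref{prop::local_well_posedness} is a welcome clarification of what the paper leaves implicit.
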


We state a corollary, which is a direct consequence of the proof of this Theorem \ref{thm:Beale_Kato_Majda_criterion}.
\begin{corollary}\label{corr:Beale_Kato_Majda}
	Let $\Omega$ be a bounded Lipschitz domain in  $\R^d$, $d=1,2$ and let Assumption~\ref{assumption1} on $\genk$ hold. Assume that $\tau>0, \, \delta\geq 0$. For a solution of \eqref{eq:nonlinear_nonuniform}, suppose there are constants $M_0$ and $\bar T$ such that for all $T<\bar T$, we have the \emph{a priori} estimate:
 \begin{equation}\label{Blow_up_Cond}
     \sup_{t\in (0,T)}\|\psi_t(t)\|_{H^1(\Omega)}  + \|\psi_{tt}\|_{L^2(\Omega)} \leq M_0.
 \end{equation}
	Then the solution can be continued in $\mathcal{U}$ to the interval $[0,\bar T]$.
\end{corollary}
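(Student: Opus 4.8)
The plan is to prove Corollary~\ref{corr:Beale_Kato_Majda} directly, which is exactly the contrapositive content of Theorem~\ref{thm:Beale_Kato_Majda_criterion}: under the \emph{a priori} bound \eqref{Blow_up_Cond} on the lower-order quantity $\|\psit\|_{H^1(\Om)}+\|\psitt\|_{L^2(\Om)}$, I would show that the higher-order energy $E(t)$ from \eqref{eq:Energy_def}, and hence the full energy $\mathbf{E}[\psi]$ together with the remaining $\mathcal{U}$-norms, stays bounded on $[0,\bar T)$. Since the local theory of Theorem~\ref{prop::local_well_posedness} can then be restarted from a time arbitrarily close to $\bar T$ with uniformly controlled data, the solution continues in $\mathcal{U}$ past $\bar T$.

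I would begin exactly as in Step~(ii) of Theorem~\ref{prop::local_well_posedness}: test \eqref{eq:nonlinear_nonuniform} with $-\D\psitt$, integrate over $(0,t)$, and arrive at \eqref{eq:first_estimate_linearized}. The memory term is again nonnegative by Assumption~\ref{assumption1} (so the critical case $\delta=0$ remains admissible), and the linear contribution $-\intt(c^2\D\psi,\D\psitt)$ is, after integration by parts in time, bounded linearly by $\intt E\,\ds$ together with boundary terms absorbed through Young's inequality and \eqref{eq:Dpsi_estimate}, precisely as before. The only delicate piece is the nonlinear term. Integrating $\intt(\psit\psitt,\D\psitt)_{L^2}$ by parts in space produces
\[
\intt(\psit\psitt,\D\psitt)_{L^2}\ds = -\intt(\nabla\psit\,\psitt,\nabla\psitt)_{L^2}\ds - \inttO \psit\,|\nabla\psitt|^2\dx\ds .
\]

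For the first piece I would invoke the Ladyzhenskaya inequality (Lemma~\ref{lemma:ladyzhenskaya_inequality}) in dimension $d=2$, applied to $\nabla\psit$ and $\psitt$, together with the \emph{a priori} bounds $\|\nabla\psit\|_{L^2(\Om)}\leq M_0$, $\|\psitt\|_{L^2(\Om)}\leq M_0$, Poincar\'e, and elliptic regularity:
\[
\big|(\nabla\psit\,\psitt,\nabla\psitt)_{L^2}\big| \lesssim \|\nabla\psit\|_{L^4(\Om)}\|\psitt\|_{L^4(\Om)}\|\nabla\psitt\|_{L^2(\Om)} \lesssim M_0\,\|\D\psit\|_{L^2(\Om)}^{1/2}\|\nabla\psitt\|_{L^2(\Om)}^{3/2} \lesssim M_0\,E ,
\]
so this term is linear in $E$. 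The second piece is the genuinely dangerous one, and it is where the dimension enters: since $H^1(\Om)\not\hookrightarrow L^\infty(\Om)$ in 2D, $\|\psit\|_{L^\infty(\Om)}$ cannot be bounded by $M_0$. Here I would use the Brezis--Gallou\"et inequality (Lemma~\ref{Lemma::brezis_gallouet}) together with $\|\psit\|_{H^2(\Om)}\lesssim\|\D\psit\|_{L^2(\Om)}\lesssim E^{1/2}$ to obtain
\[
\Big|\inttO \psit\,|\nabla\psitt|^2\dx\ds\Big| \lesssim \intt \|\psit\|_{L^\infty(\Om)}\|\nabla\psitt\|_{L^2(\Om)}^2\ds \lesssim \intt\Big(M_0\sqrt{\ln(1+E(s))}+1\Big)E(s)\ds .
\]

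Collecting the estimates yields the logarithmic integral inequality
\[
E(t)\leq C(M_0)\,E(0) + C(M_0)\intt\Big(1+\sqrt{\ln(1+E(s))}\Big)E(s)\ds .
\]
The crucial observation is that the nonlinearity $E\mapsto (1+\sqrt{\ln(1+E)})E$ satisfies the Osgood condition $\int^\infty dE/[(1+\sqrt{\ln(1+E)})E]=\infty$, since $\int^\infty dE/(E\sqrt{\ln E})=\infty$; hence the comparison ODE $z'=C(M_0)(1+\sqrt{\ln(1+z)})z$, $z(0)=C(M_0)E(0)$, has a globally defined solution and a logarithmic Gronwall argument gives $E(t)\leq z(t)<\infty$ for every $t<\bar T$. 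Combined with \eqref{eq:Dpsi_estimate} and \eqref{eq:partial_psittt}, which control $\|\D\psi\|_{L^2(\Om)}$ and $\|\psittt\|_{L^2(L^2)}$ in terms of already-bounded quantities, this shows $\mathbf{E}[\psi]$ and the full $\mathcal{U}$-norm remain finite up to $\bar T$, so Theorem~\ref{prop::local_well_posedness} extends the solution in $\mathcal{U}$ to $[0,\bar T]$. In 1D the argument is strictly simpler: the embedding $H^1(\Om)\hookrightarrow L^\infty(\Om)$ bounds $\|\psit\|_{L^\infty(\Om)}\leq M_0$ directly, the inequality becomes linear, and only the classical Gronwall lemma is needed. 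The main obstacle is conceptual rather than a single hard estimate: one must recognize that the superlinear ($E^{3/2}$) growth responsible for the finite lower bound $T_*$ in Theorem~\ref{prop::local_well_posedness} degenerates to merely \emph{log}-superlinear once $\|\psit\|_{L^\infty(\Om)}$ is tamed through Brezis--Gallou\"et, and that such mild growth precludes finite-time blow-up by Osgood's criterion.
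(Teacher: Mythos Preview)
Your proposal is correct and follows essentially the same route as the paper: the paper states that Corollary~\ref{corr:Beale_Kato_Majda} is a direct consequence of the proof of Theorem~\ref{thm:Beale_Kato_Majda_criterion}, and that proof proceeds exactly as you do---start from \eqref{eq:first_estimate_linearized}, split the nonlinear term into the two pieces you identify, control the $\|\nabla\psit\|_{L^4}\|\psitt\|_{L^4}\|\nabla\psitt\|_{L^2}$ piece via Ladyzhenskaya (Lemma~\ref{lemma:ladyzhenskaya_inequality}) and the $\|\psit\|_{L^\infty}\|\nabla\psitt\|_{L^2}^2$ piece via Brezis--Gallou\"et (Lemma~\ref{Lemma::brezis_gallouet}), and arrive at the same logarithmic integral inequality. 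The only cosmetic difference is the closing Gronwall step: the paper carries out the explicit computation $\ddt[2\sqrt{\log(1+G)}]\leq C$ to get a double-exponential bound, while you invoke the Osgood criterion abstractly; these are of course equivalent here.
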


\begin{remark}[Comparison with the result of \cite{nikolic2024infty}]
Before presenting the proof of Theorem~\ref{thm:Beale_Kato_Majda_criterion}, we compare our result to the recent work of Nikoli\'c and Winkler ~\cite{nikolic2024infty} in which they established that the $L^\infty(\Omega)$ norm of the solution of \eqref{eq:nonlinear_nonuniform} (in pressure form) with $\genk=0$ has to blow up before a gradient blow-up can occur; see~\cite[Theorem 1.1]{nikolic2024infty}.  We mention that the result of Nikoli\'c and Winkler allows for more general coefficient of the equation, e.g., $\beta \Delta u_t$ instead of $\tau c^2 \Delta u_t$. However, we do not expect this generalization to affect the present analysis, so we restrict ourselves to the relevant choice of coefficients from the modeling perspective.

To be able to compare our result to that of \cite[Theorem 1.1]{nikolic2024infty}, let us first point out that Eq.~\eqref{eq:first_eq} is a time integrated version of the equation studied in \cite{nikolic2024infty} (i.e., it is given for the pressure variable $u$ rather than the velocity potential $\psi$). The two quantities are related via the relationship \[u=\rho\psit,\] where $\rho$ is the mass density of the medium. Provided the solutions $u$ and $\psi$ are regular enough, the models are equivalent.\footnote{We refer the reader to, e.g., \cite[Section 1.9]{lions1965some}, for a rigorous proof on the equivalence of two nonlinear wave equations for two unknowns $u$ and $\psi$, where the relationship is of the form $u =\psit$.}
	
The combination of Theorem~\ref{thm:Beale_Kato_Majda_criterion} and \cite[Theorem 1.1]{nikolic2024infty} suggest that in space dimension 2, for there to be blow up in the local-in-time JMGT equation ($\genk = 0$), there has to be a simultaneous blow-up of the norms $\|\psi_t\|_{H^1(\Omega)} + \|\psi_{tt}\|_{L^2(\Omega)}$ and $\|\psi_t\|_{L^\infty(\Omega)}$ at time $T_*$. This sheds new light on the qualitative behavior near blow-up of the local JMGT model in 2D.

Furthermore, the result of Nikoli\'c and Winkler assumes considerably more regular initial data for establishing the blow-up criterion compared to their local well-posedness theory~\cite[Section 2]{nikolic2024infty}. Indeed for the blow-up characterization, they needed to assume 
\[(u_0, u_1, u_2) \in H^4(\Omega)\cap H^1_0(\Omega) \times H^2(\Omega)\cap H^1_0(\Omega) \cap H^2(\Omega)\cap H^1_0(\Omega),\]
instead of 
\[(u_0, u_1, u_2) \in H^2(\Omega)\cap H^1_0(\Omega) \times H^2(\Omega)\cap H^1_0(\Omega) \cap  H^1_0(\Omega),\]
for the local well-posedness.

In our approach, we do not need to increase the regularity of the initial data in Theorem~\ref{thm:Beale_Kato_Majda_criterion} compared to Theorem~\ref{prop::local_well_posedness}, as our strategy is to improve the estimates of Theorem~\ref{prop::local_well_posedness} assuming a condition on the boundedness of the norms in ~\eqref{Blow_up_Cond}. 
\end{remark}

We are now ready to provide the proof of Theorem~\ref{thm:Beale_Kato_Majda_criterion} and Corollary~\ref{corr:Beale_Kato_Majda} which we present for the 2D case. We comment in Remark~\ref{Remark:1D} about the changes in the 1D case.
\begin{proof}[Proof of Theorem~\ref{thm:Beale_Kato_Majda_criterion}]
	The proof follows from using Lemma~\ref{Lemma::brezis_gallouet}. 
First,  recalling \eqref{eq:first_estimate_linearized}: 
\begin{equation}\label{E_estimate_2}
	\begin{aligned}
		E(t)+ \intt \|\nabla \psitt\|_{L^2(\Om)}^2\ds + \intt \delta(\genk\Lconv \D\psitt, \D\psitt) \ds \\
		=E(0) - \intt   (c^2\D\psi - 2k_1\psit \psitt,\D\psitt)_{L^2(\Omega)}\ds.
	\end{aligned}
\end{equation}
We also have (see \eqref{eq:psit_psitt_product})
\begin{equation}\label{eq:psit_psitt_product_2}
	\begin{aligned}
	\Big|\intt  (\psit \psitt,\D\psitt)_{L^2(\Omega)} \ds\Big| &\leq  \intt \|\nabla \psit \|_{L^4(\Om)} \| \psitt \|_{L^4(\Om)} \|\nabla \psitt \|_{L^2(\Om)}\ds \\&+ \intt \|\psit\|_{L^{\infty}} \|\nabla\psitt\|^2_{L^2(\Om)}\ds. 
	\end{aligned}
\end{equation} 
	 Now,  we estimate the 
	the second term in \eqref{eq:psit_psitt_product_2} as  follows: Using Lemma~\ref{Lemma::brezis_gallouet}, we have 
	\begin{equation}\label{Term_2}
		\begin{aligned}
			\intt \|\psit\|_{L^{\infty}(\Omega)} \|\nabla\psitt\|^2_{L^2(\Om)}\ds\leq  C \intt (\|\psi_t\|_{H^1(\Omega)}\sqrt{\ln(1+\|\psi_t\|_{H^2(\Omega)})}+1) \|\nabla \psi_{tt}\|^2_{L^2(\Omega)}\ds,
		\end{aligned}
	\end{equation}
	Hence recalling \eqref{eq:Energy_def}, we can further   estimate 
	\begin{equation}\label{Estimates_BG}
		\begin{aligned}
			\intt \|\psit\|_{L^{\infty}(\Omega)} \|\nabla\psitt\|^2_{L^2(\Om)}\ds\leq  C \intt (\|\psi_t\|_{H^1(\Omega)}\sqrt{\ln(1+E(s))}+1) E(s)\ds.
		\end{aligned}
	\end{equation}
	On the other hand, we have by using Lemma \ref{lemma:ladyzhenskaya_inequality},  
	\begin{equation}\label{Estimate_Second_Term}
		\begin{aligned}  
		  \intt \|\nabla \psit \|_{L^4(\Om)}&  \| \psitt \|_{L^4(\Om)} \|\nabla \psitt \|_{L^2(\Om)}\ds \\\lesssim &\intt \|\nabla\psi_t\|_{L^2(\Omega)}^{1/2}  \|\Delta\psi_t\|_{L^2(\Omega)}^{1/2}  \|\psitt\|_{L^2(\Omega)}^{1/2} \|\nabla\psitt\|_{L^2(\Om)}^{3/2}\ds\\\lesssim &  \intt \|\psi_t\|_{H^1(\Omega)}^{1/2} \|\psitt\|_{L^2(\Omega)}^{1/2} E(s)\ds,
		\end{aligned}
	\end{equation}
In the penultimate inequality, we have used the  Poincar\'e--Friedrichs' inequality and elliptic regularity estimates by taking advantage of the fact that $\psi_t\in H^2(\Omega) \cap H_0^1(\Omega)$ and $\psitt \in H_0^1(\Omega)$. For the last inequality, we used Young's inequality  \eqref{Young_2}.
Hence, inserting \eqref{Estimates_BG} and \eqref{Estimate_Second_Term} into \eqref{E_estimate_2}, and recalling \eqref{Est_Conv} and \eqref{Linear_Term}, we obtain 
\begin{equation}\label{eqn:energy_ineq_before_Gronwall}
	\begin{aligned}
		&E(t)+ \intt \|\nabla \psitt\|_{L^2(\Om)}^2\ds + C_{\genk,\bar T}   \intt \| \genk \Lconv \D\psitt\|^2_{L^2(\Om)} \ds \\
		\lesssim&\, E(0) +\int_0^t E(s)\ds+\intt (\|\psi_t\|_{H^1(\Omega)}\sqrt{\ln(1+E(s))}+1) E(s)\ds\\
	&+\intt \|\psi_t\|_{H^1(\Omega)}^{1/2} \|\psitt\|_{L^2(\Omega)}^{1/2} E(s)\ds
	 \\
	\leq&\, C(\bar T,M_0)\left[E(0)+\intt (\sqrt{\ln(1+E(s))}+1) E(s)\ds\right],
	\end{aligned}
\end{equation}
where in the last line we have used~\eqref{Blow_up_Cond}.

To bound the energy $E(t)$ above, we follow the approach of \cite{brezis1980nonlinear}. We denote by $G(t)$ the right-hand-side of \eqref{eqn:energy_ineq_before_Gronwall}. That is 
\begin{equation}
    G(t)=C(\bar T,M_0) \left [E(0)+\intt (\sqrt{\ln(1+E(s))}+1) E(s)\right ].
\end{equation}
We have thus
\begin{equation}
	\begin{aligned}
G'(t) =&\,  C(\bar T,M_0)(\sqrt{\ln(1+E(t))}+1) E(t)\\\leq&\,  C(\bar T,M_0) (\sqrt{\ln(1+G(t))}+1) G(t),
\end{aligned}
\end{equation}
where in the last line we have used that $E(t) \leq G(t)$.
Consequently:
\begin{equation}
	\ddt [2\sqrt{\log(1+G(t))}] \leq C(\bar T,M_0).
\end{equation}
Thus, we conclude that 
\begin{equation}
	E(t)\leq G(t) \leq \exp\left[\frac{\left(C(\bar T,M_0)t + 2\sqrt{\log(1+G(0))} \right)^2}{4} \right]-1,
\end{equation}
from which it follows that the solution grows exponentially but remains bounded on the whole interval $[0,\bar T]$.
\end{proof}

\begin{remark}[On the 1D case]\label{Remark:1D}
	The proof presented above for Theorem~\ref{thm:Beale_Kato_Majda_criterion} remains valid with minimal changes in the 1D case.
 The one-dimensional case can also be optimized in view of Remark~\ref{Remark:general_d} which is given after the proof of the blow-up criterion in the 3D case established in Theorem~\ref{thm:Beale_Kato_Majda_criterion_3d}.
\end{remark}

\section{Conditional existence in the 3D case}\label{sec:cond_3D}
In this section, we state a blow-up criterion in the three-dimensional setting. The philosophy remains similar to the preceding section. Our starting point will then be a generalization of the Brezis--Gallou\"et inequality, known as Brezis--Gallou\"et-Wainger's inequality. We state in what follows a version of this inequality which will be useful to us:
\begin{lemma}[Brezis--Gallou\"et--Wainger inequality~\cite{brezis1980note}]\label{lemma:brezis_gallout_Wainger}
Let $u \in H^2(\Omega)$ with $\Omega$ a domain in $\R^3$, then
	\begin{equation}
		\|u\|_{L^\infty(\Omega)} \leq C (\|u\|_{H^\frac{3}{2}(\Omega)}\sqrt{\ln(1+\|u\|_{H^2(\Omega)})}+1),
	\end{equation}
where $C$ is a constant that depends only on $\Omega$.
\end{lemma}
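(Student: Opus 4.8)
The plan is to reduce to the whole-space case and then run a frequency-splitting argument, exactly as in the two-dimensional estimate of Brezis and Gallou\"et, with the radial bookkeeping adjusted for $\R^3$. Since $\Omega$ is a sufficiently regular domain, I would first invoke a bounded Sobolev extension operator $E\colon H^2(\Omega)\to H^2(\R^3)$; by interpolation between $L^2$ and $H^2$ it is simultaneously bounded $H^{3/2}(\Omega)\to H^{3/2}(\R^3)$, so that $\|Eu\|_{H^s(\R^3)}\lesssim \|u\|_{H^s(\Omega)}$ for $s\in\{3/2,2\}$. Because $\|u\|_{L^\infty(\Omega)}\le \|Eu\|_{L^\infty(\R^3)}$, it suffices to prove the inequality for $u\in H^2(\R^3)$, which I do henceforth.

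By Fourier inversion and the triangle inequality,
\begin{equation*}
\|u\|_{L^\infty(\R^3)} \lesssim \int_{\R^3} |\widehat u(\xi)|\,\textup{d}\xi,
\end{equation*}
and I split the frequency integral at a radius $R\ge 1$ to be chosen. On the low-frequency ball $\{|\xi|\le R\}$ I apply Cauchy--Schwarz with the weight $(1+|\xi|^2)^{3/2}$, which produces the factor $\|u\|_{H^{3/2}(\R^3)}$ together with
\begin{equation*}
\Big(\int_{|\xi|\le R}(1+|\xi|^2)^{-3/2}\,\textup{d}\xi\Big)^{1/2}\lesssim \sqrt{\ln(1+R)};
\end{equation*}
the logarithm is precisely the borderline behaviour in three dimensions, since in polar coordinates the integrand decays like $r^{-1}$. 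On the high-frequency region $\{|\xi|>R\}$ I instead use the weight $(1+|\xi|^2)^2$, yielding $\|u\|_{H^2(\R^3)}$ times $\big(\int_{|\xi|>R}(1+|\xi|^2)^{-2}\,\textup{d}\xi\big)^{1/2}\lesssim R^{-1/2}$. Combining gives
\begin{equation*}
\|u\|_{L^\infty(\R^3)} \lesssim \|u\|_{H^{3/2}(\R^3)}\sqrt{\ln(1+R)} + \|u\|_{H^2(\R^3)}\,R^{-1/2}.
\end{equation*}

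Choosing $R=\|u\|_{H^2(\R^3)}^2$ when $\|u\|_{H^2(\R^3)}\ge 1$ makes the high-frequency remainder equal to $1$ and, using $1+x^2\le(1+x)^2$, turns the low-frequency factor into $\sqrt 2\,\sqrt{\ln(1+\|u\|_{H^2(\R^3)})}$; when $\|u\|_{H^2(\R^3)}<1$ the embedding $H^2(\R^3)\hookrightarrow L^\infty(\R^3)$ (valid since $2>3/2$) bounds the left-hand side by a constant. In both cases the estimate takes the asserted form after absorbing constants into $C$ and transferring back to $\Omega$ via the extension operator.

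The only genuinely delicate point is the optimization of the cutoff $R$: it is exactly the scaling $R\sim\|u\|_{H^2}^2$ that simultaneously renders the high-frequency remainder harmless and isolates the logarithmic factor in the $H^{3/2}$ term, whereas too small an $R$ leaves an uncontrolled $H^2$ contribution and too large an $R$ inflates the logarithm. The remaining ingredients---the extension, the two Cauchy--Schwarz splittings, and the polar-coordinate evaluation of the radial integrals---are routine, and the argument is that of \cite{brezis1980note}, to which I would refer for the full details.
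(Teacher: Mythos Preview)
Your proposal is correct and follows essentially the same route as the paper: extend to $\R^3$, bound $\|u\|_{L^\infty}$ by $\|\widehat u\|_{L^1}$, split the frequency integral at a radius $R$, apply Cauchy--Schwarz with the $H^{3/2}$ weight on low frequencies and the $H^2$ weight on high frequencies, then optimize $R$. The only cosmetic discrepancy is in the high-frequency bookkeeping: the paper writes the tail as $\|u\|_{H^2}/(1+R)$ and takes $R=\|u\|_{H^2}$, whereas your estimate $\|u\|_{H^2}\,R^{-1/2}$ with $R=\|u\|_{H^2}^2$ is the accurate version of that computation in $\R^3$---the two choices lead to the same conclusion.
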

\begin{proof}
The proof in \cite{brezis1980note} is done for the case $\|u\|_{L^\infty(\Omega)} \leq 1$, the general case follows from emulating \cite[Lemma 1]{brezis1980nonlinear} and changing the low-/high-frequency splitting in their proof to:
\begin{equation}\label{Eq_Fourier}
\|\hat v\|_{L^1(\R)} = \int_{\xi\leq R} (1+|\xi|)^\frac{3}{2}\hat v \frac{1}{(1+|\xi|)^\frac{3}{2}} \,\textup{d}\xi + \int_{\xi\geq R} (1+|\xi|)^2\hat v \frac{1}{(1+|\xi|)^2} \,\textup{d}\xi.
\end{equation} 
Above, $R$ is a frequency cut-off radius fixed later in the proof to be $R= \|u\|_{H^2(\Omega)}$, $\hat v$ is the Fourier transform of the extension $v= \operatorname{P} u$ of $u$ to the whole space $\R^3$.

 Using Cauchy--Schwartz inequality, we have 
 \begin{equation}
     \int_{\xi\leq R} (1+|\xi|)^\frac{3}{2}\hat v \frac{1}{(1+|\xi|)^\frac{3}{2}} \,\textup{d}\xi \lesssim \left(\int_{|\xi|\leq R}\frac{1}{(1+|\xi|)^3}\textup{d}\xi\right)^{1/2}\|v\|_{H^{3/2}}
 \end{equation}
 Passing to polar coordinates, and using the elementary inequality 
\begin{equation}\label{ineq_elem}
    \frac{1}{2}(a^r+b^r)\leq (a+b)^r\leq 2^{r}(a^r+b^r),\qquad a,\,b,\, r\geq 0,
\end{equation}
 we have 
 \begin{equation}
 \begin{aligned}
     \int_{|\xi|\leq R}\frac{1}{(1+|\xi|)^3}\textup{d}\xi\leq&\, \sigma(S^2)\int_0^R \frac{R^2}{(1+R)^3}\textup{d} R\\
     \lesssim&\,
     \int_0^R \frac{3R^2}{1+R^3}\textup{d} R\\
      \lesssim&\, \ln (1+R^3)\\
      \lesssim&\, \ln (1+R).
     \end{aligned}
 \end{equation}
 
 Above $\sigma(S^2)$ is the surface of the unit sphere in $\R^3$. In the last line, we have used the inequality~\eqref{ineq_elem} and the properties of the logarithm. Thus, the first term in~\ref{Eq_Fourier} can be estimated as:
 \[
 \int_{\xi\leq R} (1+|\xi|)^\frac{3}{2}\hat v \frac{1}{(1+|\xi|)^\frac{3}{2}} \,\textup{d}\xi \lesssim \ln^{1/2} (1+R)\|v\|_{H^{3/2}}
 \]
 The second term in \eqref{Eq_Fourier} is estimated as follows: 
 \begin{equation}
     \int_{\xi\geq R} (1+|\xi|)^2\hat v \frac{1}{(1+|\xi|)^2} \,\textup{d}\xi\lesssim \|u\|_{H^2}\frac{1}{1+R}
 \end{equation}
 Taking $R=\|u\|_{H^2}$, we obtain the desired result. 
\end{proof}

The 3D Ladyzhenskaya inequality given in Lemma~\ref{lemma:ladyzhenskaya_inequality} is unfortunately not sufficient to control the term arising from inequality~\eqref{Estimate_Second_Term} in the course of the proof of Theorem~\ref{thm:Beale_Kato_Majda_criterion}. We therefore need to establish the following lemma:
\begin{lemma}\label{lemma:L4_interp}[$L^{4}(\Omega)$ interpolation inequality]
   Let $u \in H^1(\Omega)$ with $\Omega$ a domain in $\R^3$, then
	\begin{equation}
		\|u\|_{L^4(\Omega)} \leq C \|u\|_{H^\frac{1}{2}(\Omega)}^{1/2}\|u\|_{H^1(\Omega)}^{1/2},
	\end{equation}
where $C$ is a constant that depends only on $\Omega$.
\end{lemma}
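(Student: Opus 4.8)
The plan is to reduce the estimate to the whole space $\R^3$ via a bounded Sobolev extension operator, and then to combine a Fourier-side interpolation inequality with the critical Sobolev embedding $H^{3/4}(\R^3) \hookrightarrow L^4(\R^3)$. This mirrors the frequency-side strategy already employed in the proof of Lemma~\ref{lemma:brezis_gallout_Wainger}. The key scaling observation is that $H^{3/4}(\R^3)$ is exactly the space that embeds into $L^4(\R^3)$, since $\tfrac14 = \tfrac12 - \tfrac{3/4}{3}$, and that $\tfrac34$ is the midpoint between $\tfrac12$ and $1$, so that $H^{3/4}$ interpolates between $H^{1/2}$ and $H^1$ with equal weights.

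First I would fix a bounded extension operator $P \colon H^s(\Om) \to H^s(\R^3)$, simultaneously bounded for $s \in \{1/2, 1\}$ (available since $\Om$ is a bounded Lipschitz domain), and set $v = P u$, so that $\|v\|_{H^s(\R^3)} \lesssim \|u\|_{H^s(\Om)}$ for the relevant values of $s$, while $\|u\|_{L^4(\Om)} \leq \|v\|_{L^4(\R^3)}$. The problem is thereby transferred to $\R^3$, where the Fourier transform is available.

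For the interpolation step, I would write the fractional norms through the Fourier transform and use the factorization $(1+|\xi|^2)^{3/4} = (1+|\xi|^2)^{1/4}(1+|\xi|^2)^{1/2}$ together with the Cauchy--Schwarz inequality to obtain
\begin{equation}
\int_{\R^3}(1+|\xi|^2)^{3/4}|\hat v|^2 \,\textup{d}\xi \leq \left(\int_{\R^3}(1+|\xi|^2)^{1/2}|\hat v|^2\,\textup{d}\xi\right)^{1/2}\left(\int_{\R^3}(1+|\xi|^2)|\hat v|^2\,\textup{d}\xi\right)^{1/2},
\end{equation}
that is, $\|v\|_{H^{3/4}(\R^3)} \leq \|v\|_{H^{1/2}(\R^3)}^{1/2}\|v\|_{H^1(\R^3)}^{1/2}$. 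I would then invoke the Sobolev embedding $H^{3/4}(\R^3)\hookrightarrow L^4(\R^3)$, which is subcritical since $3/4 < 3/2$, to bound $\|v\|_{L^4(\R^3)} \lesssim \|v\|_{H^{3/4}(\R^3)}$. Chaining these estimates with the extension bounds and restricting back to $\Om$ yields the claim.

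The main obstacle is securing the critical Sobolev embedding $H^{3/4}(\R^3)\hookrightarrow L^4(\R^3)$ and verifying that the extension operator acts boundedly on both $H^{1/2}(\Om)$ and $H^1(\Om)$; both rely on the regularity of $\partial\Om$ (a Lipschitz boundary, as assumed throughout the paper). Once these two ingredients are in place, the interpolation inequality above is an elementary consequence of Cauchy--Schwarz, and the remaining steps are routine. An alternative is to bypass the extension by arguing via a Gagliardo--Nirenberg inequality on $\Om$ directly, but the Fourier route is cleaner and consistent with the preceding lemma.
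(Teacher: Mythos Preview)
Your proposal is correct and follows essentially the same strategy as the paper: both proofs rest on the interpolation $[H^{1/2},H^1]_{1/2}=H^{3/4}$ combined with the fractional Sobolev embedding $H^{3/4}\hookrightarrow L^4$ in dimension three. The only difference is cosmetic---the paper invokes the abstract complex-interpolation identity and the embedding directly on $\Omega$ via references, whereas you implement the same two steps by hand (extension to $\R^3$, Fourier-side Cauchy--Schwarz, then restriction).
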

\begin{proof}
    The proof relies on the interpolation:
    \begin{equation}
    [H^{1/2}(\Omega), H^{1}(\Omega)]_{[1/2]} = H^{3/4}(\Omega),
    \end{equation}
    see, e.g., \cite[Theorem 6.4.5]{bergh2012interpolation}. 
    
    The desired inequality follows then from the fractional Sobolev embedding result~\cite[Corollary 6.7]{di2012hitchhikerʼs}.
\end{proof}

The blow-up criterion is then given as:
\begin{theorem}\label{thm:Beale_Kato_Majda_criterion_3d}
	 Let $\Omega$ be a bounded Lipschitz domain in  $\R^3$ and let Assumption~\ref{assumption1} on $\genk$ hold. Assume that $\tau>0, \, \delta\geq 0$ and let $\psi$ be a solution to \eqref{eq:nonlinear_nonuniform} and suppose that there is a time $\bar T$ such that the solution cannot be continued in $\mathcal{U}$ to $T=\bar T$. Assume that $\bar T$ is the first such time. Then 
	\begin{equation}
		\limsup_{t\uparrow \bar T} \|\psi_t\|_{H^\frac{3}{2}(\Omega)} + \|\psi_{tt}\|_{H^{\frac12}(\Omega)} = \infty.
	\end{equation}
\end{theorem}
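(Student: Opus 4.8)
The plan is to follow the proof of Theorem~\ref{thm:Beale_Kato_Majda_criterion} line by line, replacing the two-dimensional functional inequalities by their three-dimensional analogues established above. I argue by contradiction: assuming the conclusion fails, there are constants $M_0$ and $\bar T$ so that for every $T<\bar T$,
\[
\sup_{t\in(0,T)} \left(\|\psit(t)\|_{H^{3/2}(\Om)} + \|\psitt(t)\|_{H^{1/2}(\Om)}\right) \leq M_0 .
\]
Starting from the energy identity \eqref{E_estimate_2} and the splitting \eqref{eq:psit_psitt_product_2} of the nonlinear term, the whole task reduces to bounding the two integrals $\intt \|\nabla\psit\|_{L^4(\Om)}\|\psitt\|_{L^4(\Om)}\|\nabla\psitt\|_{L^2(\Om)}\ds$ and $\intt \|\psit\|_{L^\infty(\Om)}\|\nabla\psitt\|_{L^2(\Om)}^2\ds$ by $C(\bar T,M_0)\intt (\sqrt{\ln(1+E(s))}+1)E(s)\ds$; once this is done the concluding differential-inequality argument is word-for-word that of the 2D case.

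For the $L^\infty$-term I would apply the Brezis--Gallou\"et--Wainger inequality (Lemma~\ref{lemma:brezis_gallout_Wainger}) in place of Lemma~\ref{Lemma::brezis_gallouet}, giving $\|\psit\|_{L^\infty(\Om)} \leq C(\|\psit\|_{H^{3/2}(\Om)}\sqrt{\ln(1+\|\psit\|_{H^2(\Om)})}+1)$. Combining this with the elliptic regularity bound $\|\psit\|_{H^2(\Om)}\lesssim \|\Delta\psit\|_{L^2(\Om)}\lesssim E^{1/2}$ (valid since $\psit\in H^2(\Om)\cap H_0^1(\Om)$), so that $\sqrt{\ln(1+\|\psit\|_{H^2(\Om)})}\lesssim \sqrt{\ln(1+E)}+1$, together with $\|\nabla\psitt\|_{L^2(\Om)}^2\lesssim E$ and the assumed bound $\|\psit\|_{H^{3/2}(\Om)}\leq M_0$, controls this term exactly as in \eqref{Estimates_BG}.

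For the $L^4\!\cdot\! L^4$-term I would replace the Ladyzhenskaya inequality by Lemma~\ref{lemma:L4_interp}, applied to $\nabla\psit$ and to $\psitt$, namely $\|\nabla\psit\|_{L^4(\Om)} \lesssim \|\psit\|_{H^{3/2}(\Om)}^{1/2}\|\Delta\psit\|_{L^2(\Om)}^{1/2}$ and $\|\psitt\|_{L^4(\Om)} \lesssim \|\psitt\|_{H^{1/2}(\Om)}^{1/2}\|\nabla\psitt\|_{L^2(\Om)}^{1/2}$, using $\|\nabla\psit\|_{H^{1/2}(\Om)}\lesssim \|\psit\|_{H^{3/2}(\Om)}$, the elliptic estimate $\|\nabla\psit\|_{H^1(\Om)}\lesssim \|\Delta\psit\|_{L^2(\Om)}$, and the Poincar\'e--Friedrichs inequality $\|\psitt\|_{H^1(\Om)}\lesssim\|\nabla\psitt\|_{L^2(\Om)}$. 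Multiplying the three factors yields
\[
\|\nabla\psit\|_{L^4(\Om)}\|\psitt\|_{L^4(\Om)}\|\nabla\psitt\|_{L^2(\Om)} \lesssim \|\psit\|_{H^{3/2}(\Om)}^{1/2}\|\psitt\|_{H^{1/2}(\Om)}^{1/2}\,\|\Delta\psit\|_{L^2(\Om)}^{1/2}\|\nabla\psitt\|_{L^2(\Om)}^{3/2},
\]
and Young's inequality with exponents $4$ and $4/3$ bounds the last two factors by $E(s)$, while the assumption gives $\|\psit\|_{H^{3/2}(\Om)}^{1/2}\|\psitt\|_{H^{1/2}(\Om)}^{1/2}\leq M_0$, so the integral is dominated by $M_0\intt E(s)\ds$.

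Assembling the two bounds reproduces the logarithmic Gronwall inequality \eqref{eqn:energy_ineq_before_Gronwall} with $\bar T$ in place of the 2D final time, and the monotone-quantity argument $\ddt[2\sqrt{\log(1+G(t))}]\leq C(\bar T,M_0)$ then delivers a finite bound on $E$ over all of $[0,\bar T]$; this contradicts the non-continuability at $\bar T$, proving the claimed inflation of $\|\psit\|_{H^{3/2}(\Om)}+\|\psitt\|_{H^{1/2}(\Om)}$. The main obstacle, and the only genuinely new point relative to the 2D argument, is the bookkeeping of the fractional Sobolev exponents: Lemmas~\ref{lemma:L4_interp} and~\ref{lemma:brezis_gallout_Wainger} must be applied so that exactly the norms $\|\psit\|_{H^{3/2}}$ and $\|\psitt\|_{H^{1/2}}$, and no higher-order norms, are factored out, while the residual $\|\Delta\psit\|_{L^2(\Om)}^{1/2}\|\nabla\psitt\|_{L^2(\Om)}^{3/2}$ recombines into a single power of $E(s)$. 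Checking that these interpolation exponents close precisely is where the care lies.
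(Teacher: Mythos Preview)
Your proposal is correct and follows essentially the same approach as the paper: the paper likewise replaces the two-dimensional Brezis--Gallou\"et inequality by Lemma~\ref{lemma:brezis_gallout_Wainger} to handle the $L^\infty$-term and replaces the Ladyzhenskaya inequality by Lemma~\ref{lemma:L4_interp} applied to $\nabla\psit$ and $\psitt$ to handle the $L^4\!\cdot\!L^4$-term, arriving at exactly the same product $\|\psit\|_{H^{3/2}}^{1/2}\|\psitt\|_{H^{1/2}}^{1/2}\|\Delta\psit\|_{L^2}^{1/2}\|\nabla\psitt\|_{L^2}^{3/2}$ and closing via the same Young inequality and logarithmic Gronwall argument. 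The only cosmetic difference is that the paper omits the explicit contradiction framing and the final Gronwall step, referring back to Theorem~\ref{thm:Beale_Kato_Majda_criterion} for those details.
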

\begin{proof}
The proof follows from emulating the steps of Theorem~\ref{thm:Beale_Kato_Majda_criterion} with a couple of minor differences. Owing to Lemma~\ref{lemma:brezis_gallout_Wainger}, inequality~\ref{Estimates_BG} becomes:
\begin{equation}\label{Estimates_BGW}
		\begin{aligned}
			\intt \|\psit\|_{L^{\infty}(\Omega)} \|\nabla\psitt\|^2_{L^2(\Om)}\ds\leq  C \intt (\|\psi_t\|_{H^\frac{3}{2}(\Omega)}\sqrt{\ln(1+E(s))}+1) E(s)\ds.
		\end{aligned}
	\end{equation}
Inequality~\eqref{Estimate_Second_Term} on the other hand becomes
\begin{equation}\label{Estimate_Second_Term_3D}
		\begin{aligned}  
		  \intt \|\nabla \psit \|_{L^4(\Om)}&  \| \psitt \|_{L^4(\Om)} \|\nabla \psitt \|_{L^2(\Om)}\ds \\\lesssim &\intt \|\nabla\psi_t\|_{H^\frac{1}{2}(\Omega)}^{1/2}  \|\Delta\psi_t\|_{L^2(\Omega)}^{1/2}  \|\psitt\|_{H^{\frac12}(\Omega)}^{1/2} \|\nabla\psitt\|_{L^2(\Om)}^{3/2}\ds\\\lesssim &  \intt \|\psi_t\|_{H^\frac{3}{2}(\Omega)}^{1/2} \|\psitt\|_{H^{\frac12}(\Omega)}^{1/2} E(s)\ds,
		\end{aligned}
	\end{equation}
where we have used Lemma~\ref{lemma:L4_interp}. In the last line we have used Young's inequality~\ref{Young_2}.

The rest of the proof follows similarly to Theorem~\ref{thm:Beale_Kato_Majda_criterion}, so we omit the rest of the details.

\end{proof}

\begin{remark}[Scaling of the blow-up criterion with the dimension $d$]\label{Remark:general_d}
    From Theorems~\ref{thm:Beale_Kato_Majda_criterion} and~\ref{thm:Beale_Kato_Majda_criterion_3d}, one can infer a scaling of the blow-up criterion with the space dimension $d$. Indeed, for $d = 2,3$, we found that the blow-up criterion is of the form:
    \[\|\psi_t\|_{H^{d/2}(\Omega)} + \|\psi_t\|_{H^{d/2-1}(\Omega)} < \infty.  \]
    This is also true for the one-dimensional case and is due to the scaling of the inequalities on which the analysis is based: The Brezis--Gallou\"et--Wainger inequality~\cite{brezis1980note} and variations of the Ladyzhenskaya inequality which can be obtained via interpolation of Sobolev spaces and their embeddings. We note that, when comparing this result to~\cite{nikolic2024infty}, $\|\psi_t\|_{H^{d/2}(\Omega)}$ has the same scaling as the $L^\infty(\Omega)$ norm, in the sense that we have the embedding $H^{\frac{d}{2}+\epsilon}(\Omega)\hookrightarrow L^\infty(\Omega)$ for all $\epsilon>0$, but $H^{d/2}(\Omega)\not\hookrightarrow L^\infty(\Omega)$.
\end{remark}

\section{Conclusions and outlook}
In this work, we established  a more informative local well-posedness of a fractionally damped nonlinear JMGT equation and provided a lower bound on the final time of existence as a function of the size of the initial data. 
Our analysis addressed key aspects of the behavior of the solution and highlighted the impact of the size of the initial condition on the lifespan of the solution. 
We have further investigated the conditional existence of the solutions and compared our results to the recent work of \cite{nikolic2024infty}.

One of the important future research directions is to gain a deeper understanding and characterize the qualitative behavior of blow-up in the nonlinear JMGT equation. 
It is indeed clear that the qualitative behavior near the blow-up time is significantly influenced by the space dimension of the problem.
Thus, of great interest is the construction of solutions exhibiting blow-up in the different space dimensions $d = 1,\,2,\,3$, which will help understand the blow-up mechanisms of the JMGT equation.

Additionally, it is critical to establish sufficient conditions on the fractional kernel $\genk$ to ensure the existence of global-in-time solutions. In the current study, we have based our analysis solely on the minimal  Assumption~\ref{assumption1}, which serves as a key ingredient for the local analysis. However, by imposing stronger assumptions on the kernel, it is reasonable to expect that one can obtain the global existence of solutions (similarly to what is known in the case $\genk$ is the Dirac delta distribution). Investigating the asymptotic behavior of the solution when  $\delta>0$ is very valuable (for $\delta=0$, it was proved in \cite{Pellicer:2021aa} that the operator associated with the linear part of the JMGT equation has eigenvalues with nonnegative real parts). To investigate this, it is expected that strong assumptions on the convolution kernel are needed. Identifying a minimal set of assumptions on $\genk$ will contribute not only to the understanding of the fractional JMGT models but also, more generally, to the field of fractional and wave equations with memory.

\section*{Acknowledgments}
The work of Mostafa Meliani was performed while he was a PhD candidate at the Department of Mathematics of Radboud University, The Netherlands.  The authors are grateful to Dr.
Vanja Nikoli\'c (Radboud University) for her careful reading of the manuscript and suggestions for improvements.

\end{document}